\newtheorem{theorem}{Theorem}[section]
\newtheorem{proposition}[theorem]{Proposition}
\newtheorem{lemma}[theorem]{Lemma}
\theoremstyle{definition}
\newtheorem{definition}[theorem]{Definition}
\newtheorem{remark}{Remark}
\DeclareMathOperator{\Ad}{Ad}
\DeclareMathOperator{\End}{End}
\DeclareMathOperator{\rk}{rk}
\DeclareMathOperator{\card}{card}
\DeclareMathOperator{\Span}{Span}
\DeclareMathOperator{\Hom}{Hom}
\newcommand{\C}{\mathbb{C}}
\newcommand{\sk}{\mathsf{k}}
\newcommand{\bG}{\mathbf{G}}
\newcommand{\T}{\mathcal{T}}
\title{A spectral gap theorem in simple Lie groups}
\author{Yves Benoist and Nicolas de Saxcé
\thanks{N.S. is supported by ERC AdG Grant 267259}
}
\begin{document}

\maketitle

\begin{abstract}
We establish the spectral gap property for dense subgroups generated by algebraic elements in any compact simple Lie group, generalizing earlier results of Bourgain and Gamburd for unitary groups.
\end{abstract}

\section{Introduction}

The purpose of the paper is to study the spectral gap property for measures on a compact simple Lie group $G$. If $\mu$ is a Borel probability measure on $G$, we say that $\mu$ has a \emph{spectral gap} if the spectral radius of the corresponding operator on $L^2_0(G)$ -- the space of mean-zero square integrable functions on $G$ -- is strictly less than $1$.
We also say that $\mu$ is \emph{almost Diophantine} if it satisfies, for some positive constants $C_1$ and $c_2$, for $n$ large enough and for any proper closed subgroup $H$,
$$\mu^{*n}(\{x\in G \,|\, d(x,H)\leq e^{-C_1n}\}) \leq e^{-c_2n}.$$
Using the discretized Product Theorem proved in \cite{saxceproducttheorem} and the techniques developped by Bourgain and Gamburd in \cite{bourgaingamburdsu2} for the group $SU(2)$, we prove the following theorem.

\begin{theorem}\label{adsgi}
Let $G$ be a connected compact simple Lie group and $\mu$ be a Borel probability measure on $G$. Then $\mu$ has a spectral gap if and only if it is almost Diophantine.
\end{theorem}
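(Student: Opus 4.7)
The plan is to prove the two implications of Theorem \ref{adsgi} separately.

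\emph{Spectral gap implies almost Diophantine.} Assume that the spectral radius $\rho$ of $\pi_\mu$ on $L^2_0(G)$ is strictly less than $1$. For any smooth function $f$ on $G$ and any $s>\dim(G)/2$, I would upgrade the operator bound $|\mu^{*n}(f)-\int_G f\,dg|\le \rho^n \|f\|_{L^2}$ to a Sobolev version $|\mu^{*n}(f)-\int_G f\,dg|\le C_s \rho^n \|f\|_{H^s}$. For a proper closed subgroup $H\subset G$, which has codimension at least $1$, I would then approximate $\mathbbm{1}_{V_\epsilon(H)}$ from above by a smooth bump $\phi$ satisfying $\int \phi\le C\epsilon$ and $\|\phi\|_{H^s}\le C\epsilon^{-M}$ for some $M$ depending only on $G$ and $s$. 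This yields $\mu^{*n}(V_\epsilon(H))\le C\epsilon + C_s\rho^n\epsilon^{-M}$, so that the choice $\epsilon=e^{-C_1 n}$ with $C_1<\log(1/\rho)/(2M)$ closes this direction.

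\emph{Almost Diophantine implies spectral gap.} This is the substantial direction, and I would follow the Bourgain--Gamburd strategy from \cite{bourgaingamburdsu2}. The central statement is a \emph{flattening lemma}: for some $\kappa, C>0$ and all $n$ large, with $\epsilon = e^{-Cn}$ and $P_\epsilon$ the normalized Haar-indicator of the $\epsilon$-ball around $e$,
\[
\|\mu^{*n}*P_\epsilon\|_2^2 \le |B_\epsilon|^{-1+\kappa}.
\]
I would argue by contradiction at a step $n$ where flattening first fails, so that $\|\mu^{*n}*P_\epsilon\|_2$ barely drops from $\|\mu^{*(n-1)}*P_\epsilon\|_2$ upon one more convolution by $\mu$. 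The non-commutative Balog--Szemer\'edi--Gowers machinery used in \cite{bourgaingamburdsu2} then produces a set $A\subset G$ with approximate subgroup structure at scale $\epsilon$ (tripling $|A^3|_\epsilon \le |A|_\epsilon^{1+\kappa'}$) on which $\mu^{*n}$ concentrates a significant fraction of its mass. The discretized product theorem of \cite{saxceproducttheorem} forces any such $A$ to lie in an $\epsilon^{\kappa''}$-neighborhood of a proper closed subgroup $H$ of $G$, which contradicts the almost Diophantine hypothesis once $C$ is chosen sufficiently large with respect to $C_1$.

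Having established flattening, I would deduce the spectral gap via a Sarnak--Xue type counting argument using the Plancherel decomposition
\[
\|\mu^{*n}*P_\epsilon\|_2^2 = \sum_{\pi\in\hat G}\dim(\pi)\cdot \|\widehat\mu(\pi)^n \widehat P_\epsilon(\pi)\|_{HS}^2.
\]
If the spectral radius of $\pi_\mu$ on $L^2_0(G)$ equalled $1$, one could find non-trivial $\pi$'s with some singular value of $\widehat\mu(\pi)$ arbitrarily close to $1$; using Weyl's dimension formula to bound the number of irreducibles up to a given dimension and the fact that $\widehat P_\epsilon(\pi)$ is close to the identity on low-weight $\pi$, the contribution of such $\pi$'s to the sum above would eventually exceed $|B_\epsilon|^{-1+\kappa}$, contradicting the flattening estimate.

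The main obstacle is the flattening lemma: transplanting the Balog--Szemer\'edi--Gowers dichotomy to the non-commutative small-scale setting in a \emph{general} simple compact Lie group, and combining it with the appropriate sum--product statement, is the technical heart of the argument. This is precisely where the discretized product theorem of \cite{saxceproducttheorem} plays its role, replacing the ad-hoc results available in $SU(2)$ that underlied \cite{bourgaingamburdsu2}.
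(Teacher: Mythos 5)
Your proposal follows essentially the same strategy as the paper: the easy direction via decay of the averaging operator applied to the indicator of a neighborhood of $H$ (the paper works directly with the $L^2$-norm of the indicator rather than passing through a Sobolev bound, but both amount to the same quantitative equidistribution estimate), and the hard direction via the Bourgain--Gamburd flattening scheme combined with the discretized product theorem and a high-multiplicity (Sarnak--Xue type) Plancherel argument. Two points deserve tightening. First, the flattening dichotomy compares $\|\nu_\delta*\nu_\delta\|_2$ with $\|\nu_\delta\|_2$, i.e.\ it doubles the convolution power ($\mu^{*2n}$ against $\mu^{*n}$), not ``one more convolution by $\mu$'' ($\mu^{*n}$ against $\mu^{*(n-1)}$) as you wrote; the Balog--Szemer\'edi--Gowers input is phrased in terms of the multiplicative energy $E_\delta(A,A)$ and is only triggered by the doubling comparison. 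Second, to apply Theorem~\ref{sigmakappa} one must also verify its non-concentration hypothesis (condition~2 there) at every intermediate scale $\rho\in[\delta,1]$, and this is precisely where the almost Diophantine hypothesis has to be fed in a second time -- you use it once to rule out the final $\delta^\tau$-neighborhood of a subgroup, but it is also needed to guarantee that the candidate approximate subgroup is not already trapped at a coarser scale. Neither point changes the architecture of the argument, but both are load-bearing when one fills in the details.
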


A measure $\mu$ on the compact simple Lie group $G$ is called \emph{adapted} if its support generates a dense subgroup of $G$. It is not known whether every adapted probability measure on the compact simple Lie group $G$ is almost Diophantine, but it is natural to conjecture a affirmative answer to this question. In this direction, Bourgain and Gamburd proved that if $\mu$ is an adapted probability measure on $SU(d)$ supported on elements with algebraic entries, then $\mu$ has a spectral gap. We generalize their result to an arbitrary simple group, and prove the following, using the theory of random matrix products over arbitrary local fields, as exposed in \cite{benoistquintwalk}.

\begin{theorem}\label{algebraici}
Let $G$ be a connected compact simple Lie group and $\mathcal{U}$ a fixed basis for its Lie algebra. Let $\mu$ be an adapted probability measure on $G$ and assume that for any $g$ in the support of $\mu$, the matrix of $\Ad g$ in the basis $\mathcal{U}$ has algebraic entries. Then $\mu$ is almost Diophantine, and therefore has a spectral gap.
\end{theorem}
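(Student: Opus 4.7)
By Theorem~\ref{adsgi}, it suffices to show that $\mu$ is almost Diophantine. My plan is to reduce the required non-concentration estimate to a statement about the random walk $\Ad_*\mu^{*n}$ on a Grassmannian of $\g$, and then to harvest exponential decay from large-deviation theorems for random matrix products, applied at auxiliary local completions of a number field $K$ carrying the matrix entries of $\Ad(g)$, $g \in \Supp(\mu)$.

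For the reduction, since $G$ is simple and connected, any proper closed subgroup $H \subset G$ has Lie algebra $\h \subsetneq \g$ and is contained in its Zariski closure, an algebraic subgroup with the same Lie algebra; enlarging further, one may assume $H$ is a maximal proper algebraic subgroup $H_0$, of which there are only finitely many $G$-conjugacy classes. It is therefore enough to prove the almost Diophantine bound for each conjugate $gH_0g^{-1}$, uniformly in $g \in G$. Setting $W := \Ad(g)\h_0$, the inequality $d(x, gH_0g^{-1}) \le \epsilon$ implies $d_{\mathrm{Gr}}(\Ad(x)W, W) = O(\epsilon)$ in the Grassmannian of $\g$, so the task becomes
\[
\mu^{*n}\bigl(\{x \in G : d_{\mathrm{Gr}}(\Ad(x) W, W) \le e^{-C_1 n}\}\bigr) \le e^{-c_2 n},
\]
uniformly over $W$ in the compact $G$-orbit of $\h_0$. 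A standard net argument at scale $e^{-C_1 n}$ reduces the uniformity to a pointwise bound for each $W$, provided the pointwise exponent beats the covering exponent.

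The pointwise bound is where algebraicity enters crucially. At the distinguished archimedean place of $K$, where $G$ is compact, every Lyapunov exponent of $\Ad_*\mu$ vanishes, so exponential control must come from an auxiliary place $v$: either a Galois-conjugate archimedean place (where the complexification of $\Ad(G)$ admits a non-compact real form) or a non-archimedean place at which some $\Ad(g)$ is non-integral. At any such $v$, the adapted hypothesis and the simplicity of $G$ yield Zariski-density of the semigroup generated by $\Ad_*\mu$ in the corresponding local group, and, possibly after passing to an exterior power to make the representation proximal, the theory of \cite{benoistquintwalk} furnishes large-deviation estimates for the $v$-adic geometry of $\Ad(x_n)W$. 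The product formula for algebraic numbers then converts these bounds into an exponentially small probability of near-stabilization of $W$ at the original archimedean place; the complementary event that $x_n$ exactly stabilizes $W$ (i.e.\ lies in the corresponding algebraic subgroup) is handled by a standard exponential escape estimate for the random walk from proper algebraic subgroups.

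The principal obstacle, I expect, is the verification of the strong irreducibility and proximality hypotheses of \cite{benoistquintwalk} at the auxiliary place $v$: these may fail for the adjoint representation directly and require passage to an exterior power together with a matching reinterpretation of the near-stabilization of $W$. Calibrating the exponents so that the decay at $v$ compensates both the covering exponent from the net argument and the height growth inherent to the product formula is the main quantitative point to get right.
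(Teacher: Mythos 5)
Your overall architecture is right -- reduce to almost-Diophantine (Theorem~\ref{adsgi}), separate exact stabilization of a subspace (handled by Benoist--Quint type large deviations at an auxiliary place where some Lyapunov exponent is nonzero) from near-but-not-exact stabilization (handled by an arithmetic Liouville-type input), and use finiteness of conjugacy classes of maximal subgroups. But the mechanism you propose for the near-stabilization half does not close, and this is where the paper has to do real work.

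The issue is circular height growth in your net argument. You propose to replace the arbitrary Lie algebra $L_0=\mathfrak{h}$ by a net point $W$ at scale $e^{-C_1 n}$ and then apply the product formula to conclude that a length-$n$ word $x$ with $d(\Ad(x)W,W)<e^{-C_1 n}$ must in fact satisfy $\Ad(x)W=W$. But an algebraic $W$ that approximates a transcendental $L_0$ to within $e^{-C_1 n}$ necessarily has height $\gtrsim e^{cC_1 n}$, and the Liouville/product-formula lower bound on a nonzero algebraic quantity built from $\Ad(x)$ (height $\le q^{O(n)}$) and $W$ (height $e^{O(C_1)n}$) is only $\gtrsim e^{-O(C_1)n}q^{-O(n)}$. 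The constant in the exponent now depends on $C_1$ itself, with a multiplier exceeding $1$, so the dichotomy \enquote{either $=0$ or $\ge e^{-C_1 n}$} cannot be forced by any choice of $C_1$. Mixing in large-deviations input at a place $v$ does not help: the product formula is a deterministic statement and the obstruction is the $n$-dependent height of $W$, not a probabilistic tail. The paper's Proposition~\ref{neighborhood} is designed precisely to defeat this: it applies the \emph{effective arithmetic Nullstellensatz} of Masser--W\"ustholz to the family of polynomial equations $P_{I_0,g}(v)=g\cdot v-v=0$ (for $g$ in the ball of the word metric) and $R_j(v)=0$ cutting out pure tensors. The degrees and heights of these polynomials are $\le q^{O(n)}$, \emph{independently of $L_0$}, so the Bezout coefficients furnished by the effective Nullstellensatz have heights $\le q^{O(n)}$ as well; evaluating the Bezout identity at the transcendental Pl\"ucker coordinate of $L_0$ (which is a small but not exact solution) then yields a nonzero algebraic integer of size $\le q^{O(n)}e^{-C_1 n}$, contradicting the Liouville lower bound $q^{-Mn}$ once $C_1$ is large. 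The output is a genuinely algebraic subspace $L_1$ (a common zero of the system) that is \emph{exactly} stabilized by every element of $B_\Gamma(n)\cap H_{L_0}^{(e^{-C_1 n})}$; note $L_1$ is produced by the Nullstellensatz and need not be close to $L_0$. This step cannot be replaced by a naive net-plus-product-formula argument.

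A second, smaller gap: your phrase \enquote{possibly after passing to an exterior power to make the representation proximal, together with a matching reinterpretation of the near-stabilization of $W$} undersells the difficulty of the exact-stabilization half. After replacing the adjoint representation by a proximal one over a local field $K_v$, one must still guarantee that $\Gamma\cap H$ fixes a proper subspace of the \emph{new} representation. In the paper this requires constructing the specific subrepresentations $\mathcal{S}_X\subset\bigwedge^{m_X}\mathfrak{g}_{\mathbb C}$ attached to faces $\mathcal{E}_X$ of the root system, and proving Lemma~\ref{nonirreducible} via the combinatorial Lemma~\ref{faces} about $\bigcap_{w\in W_{\tilde\alpha}}w\cdot\mathcal E_X=\{\tilde\alpha\}$. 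Strong irreducibility and proximality for $\Gamma$ are not enough; one also needs non-irreducibility for $\Gamma\cap H$, and that is the content you are glossing over.
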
 

In the case $G$ is the group $SO(n)$ of rotations of the Euclidean space of dimension $n$, Theorem~\ref{algebraici} is used by Lindenstrauss and Varjú \cite{lindenstraussvarju} to study absolute continuity of self-similar measures defined by isometries of the Euclidean space described by matrices with algebraic coefficients.

\paragraph{}

The plan of the paper is simple: in Section~\ref{spectralgap}, we prove Theorem~\ref{adsgi}, in Section~\ref{algebraic}, we prove Theorem~\ref{algebraici}.

\paragraph{}

For us, a compact simple Lie group will be a compact real Lie group whose Lie algebra is simple. We will also make use of some classical notation:
\begin{itemize}
\item[-] The Landau notation: $O(\epsilon)$ stands for a quantity bounded in absolute value by $C\epsilon$, for some constant $C$ (generally depending on the ambient group $G$).
\item[-] The Vinogradov notation: we write $x\ll y$ if, $x\leq Cy$ for some constant $C$ (again, possibly depending on the ambient group). We will also write $x\simeq y$ if $x\ll y$ and $x\gg y$, and similarly. For two real valued functions $\varphi$ and $\psi$ on $G$, we write $\varphi\ll\psi$ if there exists an absolute constant $C$ such that for all $x$ in $G$, $\varphi(x)\leq C\cdot\psi(x)$.
\end{itemize}

\paragraph{Acknowledgements.} The authors are grateful to the Israel Institute for Advanced Studies, where this work was done, during the 2013 Arithmetic and Dynamics semester.

\section{The spectral gap property}\label{spectralgap}

Let $G$ be a connected compact simple Lie group. If $\mu$ is a Borel probability measure on $G$, we define an averaging operator $T_\mu$ on the space $L^2_0(G)$ of mean-zero square-integrable functions by the formula
$$T_\mu f (x) = \int_G f(xg)\,d\mu(g),\quad \forall f\in L^2_0(G).$$

\begin{definition}
We say that a probability measure $\mu$ on $G$ has a \emph{spectral gap} if the spectral radius of the averaging operator $T_\mu$ on the space $L^2_0(G)$ is strictly less than one.
\end{definition}

The purpose of this section is to relate the spectral gap property to the following Diophantine property of measures.

\begin{definition}
We say that a probability measure $\mu$ on $G$ is \emph{almost Diophantine} if there exist positive constants $C_1$ and $c_2$ such that for $n$ large enough, for any proper closed connected subgroup $H$,
\begin{equation}\label{wd}
\mu^{*n}(H^{(e^{-C_1n})}) \leq e^{-c_2n}.
\end{equation}
where $H^{(\rho)}$ denotes the neighborhood of size $\rho$ of the closed subgroup $H$: $H^{(\rho)}=\{x\in G \,|\, d(x,H)\leq\rho\}$.
\end{definition}

With this definition, we have the following theorem.

\begin{theorem}[Spectral gap for almost Diophantine measures]\label{adsg}
Let $G$ be a connected compact simple Lie group. A Borel probability measure $\mu$ on $G$ has a spectral gap if and only if it is almost Diophantine.
\end{theorem}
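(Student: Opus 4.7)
I plan to prove the two implications separately.

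\emph{Spectral gap implies almost Diophantine.} Suppose the spectral radius of $T_\mu$ on $L^2_0(G)$ is $\lambda<1$, so $\|T_\mu^n\|_{L^2_0}\ll\lambda_1^n$ for any $\lambda_1\in(\lambda,1)$ by Gelfand's formula. The same bound holds for the left convolution operator $L_\mu f(x):=\int f(y^{-1}x)\,d\mu(y)$, since conjugation by the inversion isometry $f\mapsto f(\cdot^{-1})$ relates the spectra of $T_\mu$ and $L_{\check\mu}$. Let $H$ be a proper closed connected subgroup of $G$; then $m(H^{(\eta)})\ll\eta$ because $\dim H<\dim G=:d$. Let $\psi_\eta$ be a nonnegative smooth bump supported in $B_\eta$ with $\int\psi_\eta\,dm=1$ and $\|\psi_\eta\|_2\ll\eta^{-d/2}$. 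From the pointwise inequality $\mathbf{1}_{H^{(\rho)}}(y)\leq\int\mathbf{1}_{H^{(\rho+\eta)}}(x)\,\psi_\eta(y^{-1}x)\,dm(x)$, Fubini, and the spectral gap applied to $\psi_\eta-1\in L^2_0$, the Cauchy-Schwarz inequality gives
$$\mu^{*n}(H^{(\rho)})\ll m(H^{(\rho+\eta)})+m(H^{(\rho+\eta)})^{1/2}\,\lambda_1^n\,\eta^{-d/2}\ll \eta+\lambda_1^n\,\eta^{(1-d)/2}.$$
Choosing $\eta=\rho=e^{-C_1n}$ with $C_1<2(-\log\lambda_1)/(d-1)$ makes both summands exponentially small, which proves~\eqref{wd}.

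\emph{Almost Diophantine implies spectral gap.} This is the main content of the theorem, and I plan to follow the Bourgain-Gamburd strategy of \cite{bourgaingamburdsu2}, using the discretized Product Theorem of \cite{saxceproducttheorem} in place of the product theorem for $SU(2)$. The first step is a reformulation of spectral gap in terms of $L^2$-flattening of convolution powers at exponentially small scales. Setting $\chi_\rho:=m(B_\rho)^{-1}\mathbf{1}_{B_\rho}$, I will show that $\mu$ has a spectral gap if and only if there exist constants $c,\delta>0$ such that, for $n$ large and $\rho=e^{-cn}$,
$$\|\mu^{*n}*\chi_\rho\|_2^2\leq 1+e^{-\delta n}.$$
This equivalence is essentially formal, passing through the Peter--Weyl decomposition of $L^2(G)$.

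The heart of the proof is an $L^2$-flattening lemma: under the almost Diophantine assumption on $\mu$, if $\nu:=\mu^{*n}$ satisfies $\|\nu*\chi_\rho\|_2$ substantially larger than~$1$, then $\|\nu*\nu*\chi_\rho\|_2\leq \rho^{\delta}\|\nu*\chi_\rho\|_2$ for some $\delta>0$ and a suitable scale $\rho=e^{-cn}$. I plan to prove this by contradiction: failure of flattening forces, via a noncommutative Balog-Szemerédi-Gowers argument, the support of $\nu$ to concentrate on a set with small tripling at scale $\rho$, i.e.\ an approximate subgroup of $G$. The discretized Product Theorem from \cite{saxceproducttheorem} then confines this approximate subgroup to a small neighborhood of a proper closed subgroup $H$ of $G$, in direct contradiction with~\eqref{wd} applied to a suitable power of $n$. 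Iterating the flattening lemma $O(n)$ times drives $\|\mu^{*N}*\chi_\rho\|_2^2-1$ below $e^{-\delta N}$ with $N$ linear in $n$, which yields the spectral gap via the reformulation above.

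The main obstacle is the flattening lemma, where careful bookkeeping is required to pair the noncommutative Balog-Szemerédi-Gowers machinery with the discretized Product Theorem in the Lie-group setting and to choose the scale $\rho$ coherently with the almost Diophantine parameters $C_1,c_2$; once this lemma is in place, the iteration and the reformulation are routine.
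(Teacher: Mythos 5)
Your first implication (spectral gap $\Rightarrow$ almost Diophantine) is correct and is essentially the paper's own argument, repackaged with a smooth bump $\psi_\eta$ in place of the indicator of a slightly enlarged neighborhood of $H$; both reduce to applying the operator-norm decay to a function of $L^2$-norm $\eta^{-O(1)}$ concentrated near $H$ and using $m(H^{(\eta)})\ll\eta^{d-p}$.

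The converse contains a genuine gap at the endgame. Your flattening lemma and its proof scheme (dyadic decomposition, noncommutative Balog--Szemer\'edi--Gowers, discretized Product Theorem, contradiction with the almost Diophantine hypothesis at a suitable convolution power) match the paper's Lemma~\ref{flattening}. But you then claim that iterating it drives $\|\mu^{*N}*\chi_\rho\|_2^2$ below $1+e^{-\delta N}$, and that passing from this to the spectral gap is ``essentially formal.'' The iteration cannot reach that threshold: the flattening lemma only applies while $\|\nu_\rho\|_2^2\geq\rho^{-\alpha}$ (hypothesis (1) of Lemma~\ref{flattening}), because hypothesis (1) of Theorem~\ref{sigmakappa} caps $N(A,\delta)$ at $\delta^{-d+\alpha-\epsilon_0}$ --- a set filling up almost all of $U$ cannot be confined near a proper subgroup, so the Product Theorem is vacuous in that regime and the gain $\delta^{\epsilon}$ degenerates as $\alpha\to 0$. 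Consequently the iteration stalls once $\|\nu_\rho\|_2^2\leq\rho^{-\alpha}$; the paper stops at $\delta^{-1/2}$ after $O(1)$ (not $O(n)$) doublings. To convert the resulting bound $\|(\mu^{*C\log\frac1\delta})_\delta\|_2^2\leq\delta^{-1/2}$ --- which is very far from $1+e^{-\delta n}$ --- into a spectral gap, the paper invokes the Sarnak--Xue/quasirandomness input: each nontrivial $V_\lambda$ occurs in $L^2(G)$ with multiplicity $\dim V_\lambda$, and $\dim V_\lambda\gg\|\lambda\|$ by the Weyl dimension formula, so Parseval yields $(\dim V_\lambda)\,\|\hat\mu(\lambda)^{C\log\frac1\delta}\|_{op}^2\ll\delta^{-1/2}$ and hence $\|\hat\mu(\lambda)^{C\log\frac1\delta}\|_{op}\ll\delta^{1/4}$ for $\|\lambda\|\simeq\delta^{-1}$, giving $RS(\hat\mu(\lambda))$ bounded away from $1$ uniformly in $\lambda$. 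This step is not formal; it is where the semisimplicity of $G$ enters decisively (the method fails on the torus precisely for lack of this multiplicity). You need to add this argument, or an equivalent substitute, to close the proof.
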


\begin{remark}\label{symmetric}
The spectral radius of the averaging operator $T_\mu$ on $L^2_0(G)$ is less than one if and only if the spectral radius of $T_\mu T_{\check{\mu}}=T_{\mu*\check{\mu}}$ is less than one. This shows that it will be enough to prove the Theorem~\ref{adsg} in the case $\mu$ is symmetric. 
\end{remark}

We start by proving the trivial implication: if $\mu$ has a spectral gap, then it must be almost Diophantine.

\begin{proof}[Spectral gap $\Longrightarrow$ Almost Diophantine] 
Suppose $\mu$ has a spectral gap, and let $c>0$ such that the spectral radius of $T_\mu$ satisfies $RS(T_\mu)\leq e^{-c}$. Let $d$ be the dimension of $G$ and let $H$ be a maximal proper closed subgroup of $G$ of dimension $p$. For $\delta>0$, we can bound the $L^2$-norm of the indicator function of the $2\delta$-neighborhood of $H$:
$$\|\mathbbm{1}_{H^{(2\delta)}}\|_2 \ll \delta^{\frac{d-p}{2}}.$$
Therefore, for $n$ larger than $\frac{d-p}{2c}\log\frac{1}{\delta}$, we have
$$\|T_\mu^n \mathbbm{1}_{H^{(2\delta)}}\|_2 \ll \delta^{d-p}.$$ 
Making the left-hand side explicit, we find
$$\sqrt{\int_G \mu^{*n}(xH^{(2\delta)})^2\,dx} \ll \delta^{d-p}$$
and this implies,
$$\mu^{*n}(H^{(\delta)}) \ll \delta^{\frac{d-p}{2}}.$$
Choosing $C_1\leq\frac{2c}{d-p}$ and $c_2=c$, and letting $\delta=e^{-C_1n}$, this shows that $\mu$ is almost Diophantine.
\end{proof}

To prove the converse implication in Theorem~\ref{adsg}, we use the strategy developped by Bourgain and Gamburd. If $A$ is a subset of a metric space, for $\delta>0$, we denote by $N(A,\delta)$ the minimal cardinality of a covering of $A$ by balls of radius $\delta$. We have the following Product Theorem \cite[Theorem~3.9]{saxceproducttheorem}.

\begin{theorem}\label{sigmakappa}
Let $G$ be a simple Lie group of dimension $d$. There exists a neighborhood $U$ of the identity in $G$ such that the following holds.\\
Given $\alpha\in (0,d)$ and $\kappa>0$, there exists $\epsilon_0=\epsilon_0(\alpha,\kappa)>0$ and $\tau=\tau(\alpha,\kappa)>0$ such that, for $\delta>0$ sufficiently small, if $A\subset U$ is a set satisfying
\begin{enumerate}
\item $N(A,\delta) \leq \delta^{-d+\alpha-\epsilon_0}$,
\item for all $\rho\geq\delta$, $N(A,\rho)\geq \rho^{-\kappa}\delta^{\epsilon_0}$,
\item $N(AAA,\delta)\leq\delta^{-\epsilon_0}N(A,\delta)$,
\end{enumerate}
then $A$ is included in a neighborhood of size $\delta^\tau$ of a proper closed connected subgroup of $G$.
\end{theorem}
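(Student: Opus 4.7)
The plan is to implement the Bourgain-Gamburd-Tao strategy: combine a noncommutative Balog-Szemerédi-Gowers argument, a transfer to the Lie algebra via the exponential map, a dichotomy on the linear span of the transferred set, and a discretized sum-product (ring) theorem to force $A$ either to grow beyond what (3) allows or to lie near a proper closed connected subgroup.

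First I would apply Tao's noncommutative BSG lemma to the tripling hypothesis (3) to extract a symmetric approximate group $H$ with $N(H,\delta) \simeq N(A,\delta)$ up to a multiplicative loss $\delta^{-O(\epsilon_0)}$, such that $A$ is covered by a bounded number of translates of $H$. Then I transfer to the Lie algebra: shrinking $U$ if necessary so that $\log$ is a diffeomorphism onto a neighborhood of $0$, set $X := \log H \subset \g$. By Baker-Campbell-Hausdorff, the approximate-subgroup structure of $H$ translates into approximate closure of $X$ under addition at scale $\delta$, and conjugation in $H$ translates into an approximate $\Ad(H)$-invariance of $X$, with all BCH corrections of quadratic order and hence negligible because $U$ is small.

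Let $V \subset \g$ be the subspace spanned by the "directions of large covering number" of $X$, defined quantitatively from (2). By construction $V$ is approximately invariant under $\Ad(H)$, hence under $\ad(X)$, so $V$ is approximately a Lie subalgebra of $\g$. If $V$ is proper, then $H$ lies within a $\delta^\tau$-neighborhood of the connected subgroup of $G$ with Lie algebra $V$, and pulling back to $A$ gives the desired conclusion. If $V = \g$, the simplicity of $\g$ forces that iterated $\ad$-brackets of $X$ span every direction, and in particular one can choose a root-like one-parameter direction $\R v \subset \g$ such that a projection $Y$ of $X$ onto $\R v$ has $N(Y,\delta) \gg \delta^{-\alpha'}$ for some $\alpha' \in (0,1)$, inherits non-concentration at all scales from (2), and is approximately closed under addition and under multiplication by a fixed finite set of nontrivial scalars coming from structure constants of $\g$. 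Bourgain's discretized ring theorem then asserts that such $Y$ must satisfy $N(Y\cdot Y+Y\cdot Y, \delta) \gg \delta^{-\epsilon_0'}\, N(Y,\delta)$ for some $\epsilon_0' > \epsilon_0$, which contradicts the tripling bound (3) after transferring back through the exponential map.

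The main obstacle, and what makes this theorem substantially harder than the abelian sum-product theorem it reduces to, is the construction in the last step: extracting a clean one-parameter direction on which one can apply the discretized ring theorem, while carrying along both the all-scales non-concentration from (2) and the approximate closure from (3) without catastrophic losses. The bookkeeping of the small exponent $\epsilon_0$ through the BSG step, the BCH errors, the definition of $V$, the root-direction construction, and the final ring-theoretic estimate is what fixes $\epsilon_0 = \epsilon_0(\alpha,\kappa)$ and $\tau = \tau(\alpha,\kappa)$ in terms of the parameters of Bourgain's theorem; it is also what forces (2) to be genuinely an all-scales non-concentration rather than a single-scale statement, since the projection onto $\R v$ may distort individual scales but preserves the overall multi-scale profile.
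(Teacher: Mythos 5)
Your overall plan --- reduce via Tao's approximate-group machinery, transfer to $X=\log H$ in the Lie algebra, split on whether $V=\operatorname{span}X$ is proper or all of $\mathfrak{g}$, and in the full-span case contradict the tripling bound via the discretized sum--product theorem --- is the right Bourgain--Gamburd template. But both branches of the dichotomy, as you have written them, contain gaps that are not cosmetic: they are exactly where the theorem lives.

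In the branch where $V$ is proper, you observe that $V$ is only \emph{approximately} $\ad$-stable, call it ``approximately a Lie subalgebra'', and then refer to ``the connected subgroup of $G$ with Lie algebra $V$''. An approximately bracket-closed subspace is not a subalgebra, and you give no argument that it lies within $\delta^\tau$ of one, nor that the associated connected subgroup of $G$ is closed. Passing from a $\delta^{O(\epsilon_0)}$-approximate $\ad$-invariance to a genuine proper closed connected $H_1$ with $A\subset H_1^{(\delta^\tau)}$ is a quantitative rigidity statement that has to be proved, with explicit exponents, and your sketch does not engage with it. In the branch $V=\mathfrak g$, the reduction to Bourgain's discretized ring theorem is mis-posed. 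You project $X$ onto a line $\R v$ to obtain $Y\subset\R$ and assert $Y$ is approximately closed under addition and ``multiplication by a fixed finite set of scalars coming from the structure constants''. The latter is a linear condition (every interval satisfies it) and is not the multiplicative closure $N(Y\cdot Y,\delta)\ll\delta^{-\epsilon}N(Y,\delta)$ that the ring theorem requires. The obstruction is structural: the bracket is a bilinear map $\mathfrak g\times\mathfrak g\to\mathfrak g$, so composing with a one-dimensional projection yields a bilinear form on $X\times X$, not the product of the projection with itself. To manufacture a genuine approximate ring one must exploit the bilinearity directly --- for instance via an $\mathfrak{sl}_2$-triple $(e,h,f)$, where $[se,tf]=st\,h$ converts bracket-closure of $X$ into multiplicative closure of a scalar set, provided one also shows (using hypothesis (2)) that $X$ populates the two root lines across a range of scales. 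That construction, together with the bookkeeping that turns the resulting ring-theoretic growth into a contradiction with (1) and (3), is the heart of the argument and is missing from your proposal.
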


We will use Theorem~\ref{sigmakappa} to derive a flattening statement for measures. 
For $\delta>0$, we let
$$P_\delta=\frac{\mathbbm{1}_{B(1,\delta)}}{|B(1,\delta)|},$$
(where $|\cdot|$ is the volume associated to the Haar probability measure on $G$)
and if $\mu$ is a probability measure on $G$, we denote by $\mu_\delta$ the function approximating $\mu$ at scale $\delta$:
$$\mu_\delta = \mu*P_\delta.$$

\begin{lemma}[$L^2$-flattening]\label{flattening}
Let $G$ be a connected compact simple Lie group.
Given $\alpha,\kappa>0$, there exists $\epsilon>0$ such that the following holds for any $\delta>0$ small enough.\\
Suppose $\mu$ is a symmetric Borel probability measure on $G$ such that one has
\begin{enumerate}
\item $\|\mu_\delta\|_2^2 \geq \delta^{-\alpha}$,
\item for any $\rho\geq\delta$ and any closed connected subgroup $H$, $\mu*\mu(H^{(\rho)}) \leq \delta^{-\epsilon}\rho^\kappa$.
\end{enumerate}
Then,
$$\|\mu_\delta*\mu_\delta\|_2 \leq \delta^\epsilon \|\mu_\delta\|_2.$$
\end{lemma}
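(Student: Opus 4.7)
The plan is Bourgain--Gamburd's: assume the conclusion fails and derive a contradiction from the Product Theorem (Theorem~\ref{sigmakappa}) applied to an approximate-group subset extracted from $\mu_\delta$.

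\textbf{Extraction of an approximate group.} Set $f=\mu_\delta$ and suppose $\|f*f\|_2 > \delta^\epsilon\|f\|_2$ with $\epsilon>0$ to be chosen in terms of $\alpha$ and $\kappa$; write $K=\delta^{-\epsilon}$, so $\|f*f\|_2\geq K^{-1}\|f\|_2$. Decompose $f$ dyadically as $f\simeq\sum_j 2^j\mathbbm{1}_{A_j}$. Since the trivial Young bound only gives $\|f*f\|_2\leq\|f\|_2$, this $L^2$-concentration must stem from an energy concentration at some dyadic level, and a non-commutative metric version of the Balog--Szemer\'edi--Gowers--Pl\"unnecke argument, in the spirit of \cite{bourgaingamburdsu2}, then produces a subset $A\subset A_j$ satisfying
\begin{equation*}
N(A,\delta) \geq K^{-O(1)}\delta^{-d}\|f\|_2^{-2}, \qquad N(AAA,\delta)\leq K^{O(1)}N(A,\delta),
\end{equation*}
together with the mass bound $\mu(A^{(\delta)})\geq K^{-O(1)}$.

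\textbf{Checking the hypotheses of Theorem~\ref{sigmakappa}.} Hypothesis (1) of the lemma, $\|f\|_2^2\geq\delta^{-\alpha}$, gives $N(A,\delta)\leq K^{O(1)}\delta^{-d+\alpha}\leq\delta^{-d+\alpha-O(\epsilon)}$, so conditions (1) and (3) of the Product Theorem are in place as soon as $\epsilon$ is small compared with $\epsilon_0(\alpha,\kappa/2)$. For the multi-scale non-concentration condition (2), suppose $A$ were covered by $N$ balls of radius $\rho\geq\delta$; some ball $B=B(x,2\rho)$ would then satisfy $\mu(B)\geq\mu(A^{(\delta)})/N\geq K^{-O(1)}/N$. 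Since $\mu$ is symmetric, $\mu*\mu(BB^{-1})\geq\mu(B)^2$, and $BB^{-1}\subset\{e\}^{(C\rho)}$, so hypothesis (2) applied to the trivial subgroup $H=\{e\}$ yields
\begin{equation*}
\mu(B)^2 \leq \mu*\mu(\{e\}^{(C\rho)}) \leq \delta^{-\epsilon}(C\rho)^\kappa,
\end{equation*}
and rearranging gives $N(A,\rho)\geq K^{-O(1)}\delta^{\epsilon/2}\rho^{-\kappa/2}$, which is condition (2) of Theorem~\ref{sigmakappa} with $\kappa$ replaced by $\kappa/2$.

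\textbf{Conclusion and main obstacle.} Theorem~\ref{sigmakappa} then produces a proper closed connected subgroup $H\subsetneq G$ with $A\subset H^{(\delta^\tau)}$, whence $\mu*\mu(H^{(2\delta^\tau)})\geq\mu(A^{(\delta)})\geq\delta^{O(\epsilon)}$. Choosing $\epsilon$ so small that $\delta^{O(\epsilon)}$ exceeds $\delta^{-\epsilon}(2\delta^\tau)^\kappa$ contradicts hypothesis (2) applied with $\rho=2\delta^\tau$, completing the argument. The main technical obstacle is the first step: carrying out the non-commutative Balog--Szemer\'edi--Gowers--Pl\"unnecke extraction in the metric $\delta$-scale setting and tracking the polynomial losses in $K$ through the dyadic pigeonholing, in order to arrive at a single set $A$ that is simultaneously large in covering number, approximately closed under tripling at scale $\delta$, and charged by $\mu$. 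Once $A$ is in hand, the remainder is a fairly standard translation between covering numbers and convolution mass, together with careful bookkeeping of the constants in $\epsilon$, $\tau$, and $\kappa$.
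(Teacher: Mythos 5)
Your plan is in essence the paper's own: decompose $\mu_\delta$ dyadically, pigeonhole to find a heavy level set with large multiplicative energy, invoke Tao's non-commutative Balog--Szemer\'edi--Gowers theorem (the paper's Theorem~\ref{bsgt}) to extract an approximate group, verify the three hypotheses of the Product Theorem (Theorem~\ref{sigmakappa}), and contradict assumption~(2). You have also correctly identified the BSG extraction as the load-bearing step. One small genuine improvement: to verify the multi-scale non-concentration condition of Theorem~\ref{sigmakappa} you apply hypothesis~(2) with the trivial subgroup $H=\{e\}$ (using $BB^{-1}\subset\{e\}^{(C\rho)}$ and symmetry of $\mu$ to get $\mu(B)^2\le\mu*\mu(\{e\}^{(C\rho)})$), at the cost of halving $\kappa$. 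The paper instead notes that every $\rho$-ball lies in the $\rho$-neighborhood of some proper closed connected subgroup and applies (2) to that subgroup; your route is slightly cleaner and the loss is harmless.

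Where the proposal is imprecise is exactly in the step you flag as the obstacle. You claim the BSG/Pl\"unnecke machinery produces a set $A\subset A_j$ with $N(AAA,\delta)\leq K^{O(1)}N(A,\delta)$ and $\mu(A^{(\delta)})\geq K^{-O(1)}$. This is not what Theorem~\ref{bsgt} delivers. The theorem gives a $K^{O(1)}$-approximate subgroup $\tilde H$ and a coset $\tilde Hy$ with $N(A_j\cap\tilde Hy,\delta)\geq K^{-O(1)}N(A_j,\delta)$. Tripling is controlled for $\tilde H$ itself (and hence for $\tilde H^2$, via $\tilde H\tilde H\subset X\tilde H$), not for a subset of the coset $\tilde Hy$: the set $(A_j\cap\tilde Hy)^3\subset\tilde Hy\tilde Hy\tilde Hy$ has no a priori small covering number unless $y$ normalizes $\tilde H$. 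The paper sidesteps this by taking the Product-Theorem input to be $\tilde H^2\cap U$ (for a fixed neighborhood $U$ of $1$), for which small tripling is automatic from the approximate-group axiom, and it verifies the ``charged by $\mu*\mu$'' condition by the inequality $\mu_\delta*\mu_\delta(\tilde H^2\cap U)\geq\mu_\delta(\tilde Hx^{-1}\cap B^{-1})\,\mu_\delta(x\tilde H\cap B)\geq\delta^{O(\epsilon)}$ after locating a ball $B$ of radius $r$ meeting $x\tilde H$ with positive mass. If you want to retain a set of the form $A\subset A_j$, you must first translate into $\tilde H$ (take $A=(A_j\cap\tilde Hy)y^{-1}$), then replace $A$ by $A^{-1}A$ or $AA^{-1}$ before invoking Theorem~\ref{sigmakappa}, and redo the mass bookkeeping for $\mu*\mu$ accordingly; this is exactly the detour the paper's choice of $\tilde H^2\cap U$ avoids. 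Aside from this point, the proposal is a sound reconstruction of the argument.
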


The proof goes by approximating the measure $\mu_\delta$ by dyadic level sets. We say that a collection of sets $\{X_i\}_{i\in I}$ is \emph{essentially disjoint} if for some constant $C$ depending only on the ambient group $G$, any intersection of more than $C$ distinct sets $X_i$ is empty. We will use the following lemma.

\begin{lemma}\label{dyadic}
Let $G$ be a compact Lie group, $\mu$ a Borel probability measure on $G$ and $\delta>0$. There exist subsets $A_i$, $0\leq i \ll \log\frac{1}{\delta}$ such that
\begin{enumerate}
\item $\mu_\delta \ll \sum_i 2^i \mathbbm{1}_{A_i} \ll \mu_{4\delta}$
\item Each $A_i$ is an essentially disjoint union of balls of radius $\delta$.
\end{enumerate}
\end{lemma}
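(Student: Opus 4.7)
The plan is to cover $G$ by a bounded-multiplicity family of $\delta$-balls and then bucket the pointwise values of $\mu_\delta$ dyadically across this cover. First I would pick a maximal $\delta/2$-separated set $\{x_j\}_{j\in J}\subset G$. The balls $B(x_j,\delta/2)$ are then pairwise disjoint while the $B(x_j,\delta)$ cover $G$, so a standard volume comparison (using bi-invariance of the Haar measure) gives the cover $\{B(x_j,\delta)\}_j$ bounded multiplicity: each point of $G$ lies in at most some $C=C(G)$ of these balls. This delivers the essential disjointness of (2) for free, as soon as the $A_i$ are defined as sub-unions of this cover.

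The key analytic input, and essentially the only non-bookkeeping step, is a one-sided constancy estimate for $\mu_\delta$ on each $B(x_j,\delta)$. Writing $\mu_\delta(x)=\mu(B(x,\delta))/|B(1,\delta)|$ and using that $B(x,\delta)\subseteq B(y,4\delta)$ whenever $x,y\in B(x_j,\delta)$, one obtains
$$\mu_\delta(x)\leq\frac{\mu(B(y,4\delta))}{|B(1,\delta)|}=\frac{|B(1,4\delta)|}{|B(1,\delta)|}\,\mu_{4\delta}(y)\ll\mu_{4\delta}(y)$$
for all $x,y\in B(x_j,\delta)$. Setting $a_j:=\sup_{x\in B(x_j,\delta)}\mu_\delta(x)$, one then has simultaneously $a_j\geq\mu_\delta(x)$ for every $x\in B(x_j,\delta)$ and $a_j\ll\mu_{4\delta}(y)$ for every $y\in B(x_j,\delta)$. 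This is exactly the place where the inflation of the scale from $\delta$ to $4\delta$ in the statement enters; the factor $4$ is not sharp, it only needs to be large enough to swallow the two shifts by $\delta$.

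With this in hand, the dyadic bucketing proceeds routinely. For each $j$ with $a_j>0$, set $i(j)=\lceil\log_2 a_j\rceil$, so that $a_j\leq 2^{i(j)}<2a_j$, and define $A_i=\bigcup_{j:\,i(j)=i}B(x_j,\delta)$. Since $\|\mu_\delta\|_\infty\leq|B(1,\delta)|^{-1}\ll\delta^{-d}$, only indices $i(j)\leq d\log_2(1/\delta)+O(1)$ occur; a harmless shift of the indexing, together with collapsing the bottom buckets into one, arranges $0\leq i\ll\log(1/\delta)$. For any $x$ with $\mu_\delta(x)>0$, picking $j$ with $x\in B(x_j,\delta)$ gives $\sum_i 2^i\mathbbm{1}_{A_i}(x)\geq 2^{i(j)}\geq a_j\geq\mu_\delta(x)$, which is the left half of (1). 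For the right half, at a fixed $x$ the sum $\sum_i 2^i\mathbbm{1}_{A_i}(x)$ has at most $C$ nonzero terms by bounded multiplicity of the cover, each of size $2^{i(j)}<2a_j\ll\mu_{4\delta}(x)$, and hence is $\ll\mu_{4\delta}(x)$. The main thing to verify carefully is thus the one-sided constancy of $\mu_\delta$ on scale $\delta$; the rest is covering and dyadic pigeonholing.
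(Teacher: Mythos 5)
The paper itself gives no proof here, only a citation to the $SU(2)$ case; your write-up is a reconstruction, and the core mechanism — bounded-multiplicity $\delta$-cover from a maximal separated set, one-sided local constancy $\mu_\delta(x)\ll\mu_{4\delta}(y)$ on each ball, dyadic bucketing of the local suprema $a_j$ — is the right one and correctly yields both halves of the pointwise inequality over the unrestricted integer range of $i(j)$. The gap is precisely in the step you call harmless: forcing the index set into $\{0,1,\dots,O(\log\frac1\delta)\}$. The suprema $a_j$ can be arbitrarily small positive numbers; for instance, if $\mu$ carries an atom of mass $\eta\ll\delta^{\dim G}$ at a point far from the rest of its support, the balls $B(x_j,\delta)$ near that atom have $a_j\asymp\eta\delta^{-\dim G}\ll 1$, and $\eta$ can be as small as we like independently of $\delta$. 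If you keep the collapsed bottom bucket as $A_0$ with weight $2^0=1$, the upper bound $\sum_i2^i\mathbbm{1}_{A_i}\ll\mu_{4\delta}$ fails at those points, since $\mu_{4\delta}\asymp\eta\delta^{-\dim G}\ll 1$ there while the sum is $\geq 1$; if instead you discard those balls, the lower bound $\mu_\delta\ll\sum_i2^i\mathbbm{1}_{A_i}$ fails there, since the sum vanishes while $\mu_\delta>0$. A shift of the indexing does not help either: it multiplies the whole sum by a $\delta$-dependent power of $2$, which the paper's Vinogradov convention (constant depending only on $G$) forbids. In short, the constraint $i\geq 0$ together with both pointwise bounds would force $\{\mu_\delta>0\}\subseteq\{\mu_{4\delta}\gg 1\}$, which fails for general $\mu$.

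The way out, which you are one short step from, is to notice that the offending set is exactly $\{\mu_\delta<1\}$ and is negligible for the intended use: either weaken the lower bound to $\mu_\delta\ll 1+\sum_i2^i\mathbbm{1}_{A_i}$, or assert it only on $\{\mu_\delta\geq 1\}$, while keeping $i\geq 0$ and simply discarding the low balls, which leaves the upper bound intact. One must then check, in the proof of Lemma~\ref{flattening}, that the extra $O(1)$ term is absorbed — it is, because the hypothesis $\|\mu_\delta\|_2\geq\delta^{-\alpha/2}$ makes every relevant quantity there much larger than $1$. This caveat really belongs in the statement of Lemma~\ref{dyadic}; but a complete write-up has to address it rather than wave at it. Separately, a minor slip: $\delta/2$-separation gives disjointness of the balls $B(x_j,\delta/4)$, not $B(x_j,\delta/2)$; this does not affect the bounded-multiplicity conclusion.
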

\begin{proof}
A proof in the case $G=SU(2)$ is given in \cite{lindenstrausssaxcesu2} and also applies in this more general setting, mutatis mutandis.
\end{proof}

To derive Lemma~\ref{flattening}, we will also use the non-commutative Balog-Szemerédi-Gowers Lemma, due to Tao. If $A$ and $B$ are two subsets of a metric group $G$, we define the \emph{multiplicative energy} of $A$ and $B$ at scale $\delta$ by
$$E_\delta(A,B) = N(\{(a,b,a',b')\in A\times B\times A\times B \,|\, d(ab,a'b')\leq\delta\},\delta).$$
(See~\cite{taoestimates} for elementary properties.) We have the following important theorem (see Tao~\cite[Theorem~6.10]{taoestimates}).

\begin{theorem}[Non-commutative Balog-Szemerédi-Gowers Lemma]\label{bsgt}
Let $G$ be a compact Lie group with a Riemannian metric. There exists a constant $C>0$ depending only on $G$ such that the following holds for any $\delta>0$ and any $K\geq 2$.\\
Suppose that $A$ and $B$ are non-empty subsets of $G$ such that
$$E_\delta(A,B) \geq \frac1K N(A,\delta)^{\frac32}N(B,\delta)^{\frac32}.$$
Then there exists a $K^C$-approximate subgroup $H$ and elements $x,y$ in $G$ such that
\begin{itemize}
\item $N(H,\delta) \leq K^C\cdot N(A,\delta)^{\frac12}N(B,\delta)^{\frac12}$
\item $N(A\cap xH,\delta) \geq K^{-C}\cdot N(A,\delta)$
\item $N(B\cap Hy,\delta) \geq K^{-C}\cdot N(B,\delta)$.
\end{itemize}
\end{theorem}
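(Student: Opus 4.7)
The plan is to reduce Theorem~\ref{bsgt} to the purely combinatorial non-commutative Balog--Szemer\'edi--Gowers lemma by discretizing at scale $\delta$. First I would fix maximal $\delta$-separated subsets $A_0 \subset A$ and $B_0 \subset B$, so that $|A_0| \asymp N(A,\delta)$ and $|B_0| \asymp N(B,\delta)$. The hypothesis on $E_\delta(A,B)$ then translates, up to constants depending only on $G$, into the counting lower bound
$$\#\bigl\{(a,b,a',b')\in A_0\times B_0\times A_0\times B_0 : d(ab,a'b')\leq C\delta\bigr\} \gg K^{-1}|A_0|^{3/2}|B_0|^{3/2}.$$
This is the ``soft'' BSG hypothesis: instead of exact coincidences of products, we count products lying in a common $O(\delta)$-ball.

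Next I would run the graph-theoretic core of the Balog--Szemer\'edi--Gowers argument in this discretized setting, following Tao's non-commutative adaptation. Form a bipartite graph on $A_0 \sqcup B_0$ by connecting $a$ and $b$ whenever $ab$ falls in a ``popular'' $\delta$-ball; the energy lower bound forces the edge density to be $\gg K^{-O(1)}$. The BSG graph lemma then yields large subsets $A_1 \subset A_0$, $B_1 \subset B_0$ in which every pair $(a,b)$ is linked by many length-three paths whose intermediate products all cluster in $\delta$-balls. Combining this with Ruzsa-type covering inequalities at scale $\delta$ (see \cite{taoestimates}), one bounds $N(A_1^{-1}A_1 B_1 B_1^{-1},\delta)$ by $K^{O(1)}\sqrt{N(A,\delta)N(B,\delta)}$. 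Tao's non-commutative trick then produces, from an appropriate iterated product set $\tilde H$, a $K^{O(1)}$-approximate subgroup at scale $\delta$, together with elements $x,y$ such that $x\tilde H$ and $\tilde H y$ meet, respectively, large portions of $A_0$ and $B_0$. Taking $H$ to be the $\delta$-thickening of $\tilde H$ gives the three bounds stated in the theorem.

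The main obstacle is the careful bookkeeping of $\delta$-approximation constants. Multiplying two $\delta$-close elements only yields $O(\delta)$-close products, with implicit constants governed by the Lipschitz behavior of multiplication near the identity; each iteration of the graph argument accumulates such factors, and one must verify that they remain polynomial in $K$ and independent of $\delta$. Fixing a relatively compact neighborhood of the identity at the outset and working with a bi-invariant Riemannian metric keep these constants uniform, so the combinatorial BSG argument transfers to the metric setting without loss. A secondary technical point is that the approximate group $\tilde H$ produced combinatorially must be symmetrized and slightly enlarged so that its $\delta$-thickening satisfies the axioms of a metric $K^C$-approximate subgroup; this is routine given the preceding covering estimates.
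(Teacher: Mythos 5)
This theorem is not proved in the paper at all: it is quoted verbatim from Tao (\cite[Theorem~6.10]{taoestimates}), so there is no in-paper argument to measure your proposal against. What you outline --- discretize $A$ and $B$ by maximal $\delta$-separated subsets $A_0,B_0$ with $|A_0|\asymp N(A,\delta)$, transfer the energy hypothesis to a quadruple count with tolerance $O(\delta)$ using the Lipschitz continuity of multiplication, run the graph-theoretic Balog--Szemer\'edi--Gowers core together with Ruzsa-type covering at scale $\delta$, and then thicken the resulting set to obtain a genuine $K^{O(1)}$-approximate subgroup --- is precisely the route Tao takes: his paper develops the discrete noncommutative BSG machinery first and then passes to the metric-entropy formulation by exactly this kind of $\delta$-separated-set transfer. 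So your plan is the correct and standard one. Be aware, though, that as written it is a plan rather than a proof: the three genuinely nontrivial steps (the bipartite-graph ``paths of length three'' lemma, the covering calculus showing $N$ of the iterated product set is $K^{O(1)}\sqrt{N(A,\delta)N(B,\delta)}$, and the verification that the symmetrized, thickened $\tilde H$ satisfies $HH\subset XH$ with $|X|\leq K^C$ rather than merely being an approximate group ``at scale $\delta$'') are all deferred to the citation; they constitute essentially the whole content of Tao's Sections 5--6, and the last of them is the one place where the bookkeeping of $O(\delta)$ losses genuinely matters. Since the paper itself simply invokes Tao's result, deferring these steps is entirely appropriate here.
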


Recall that a subset $H$ of $G$ is called a \emph{$K$-approximate subgroup} if it is symmetric and there exists a finite symmetric set $X\subset H^2$ of cardinality at most $K$ such that $HH\subset XH$.
We are now ready to prove Lemma~\ref{flattening}.

\begin{proof}[Proof of Lemma~\ref{flattening}]
Write
$$\mu_\delta \ll \sum_i 2^i \mathbbm{1}_{A_i} \ll \mu_{4\delta}$$
as in Lemma~\ref{dyadic}. 
Note that for all $i$, one has
$$2^i|A_i|^{\frac12} = \|2^i\mathbbm{1}_{A_i}\|_2 \ll \|\mu_{4\delta}\|_2 \simeq \|\mu_\delta\|_2,$$
and
$$2^i|A_i| \simeq 2^i \delta^d N(A_i,\delta) \ll 1.$$
Assume for a contradiction that for some $\epsilon>0$,
$$\|\mu_\delta*\mu_\delta\|_2 \geq \delta^\epsilon \|\mu_\delta\|_2,$$
with $\delta>0$ arbitrarily small. This gives,
\begin{align*}
\delta^\epsilon \|\mu_\delta\|_2 & \ll \|\sum_{i,j} 2^i \mathbbm{1}_{A_i}*2^j\mathbbm{1}_{A_j}\|_2\\
& \leq \sum_{i,j} \|2^i \mathbbm{1}_{A_i}*2^j\mathbbm{1}_{A_j}\|_2,
\end{align*}
and as the sum on the right-hand side contains at most $O((\log \delta)^2)$ terms, we must have, for some $i$ and $j$,
$$\|2^i\mathbbm{1}_{A_i}*2^j\mathbbm{1}_{A_j}\|_2 \gg \frac{\delta^\epsilon}{(\log\delta)^2}\|\mu_\delta\|_2 \geq \delta^{O(\epsilon)}\|\mu_\delta\|_2.$$
Therefore,
\begin{equation}\label{seq}
\delta^{O(\epsilon)}\|\mu_\delta\|_2 \leq \|2^i\mathbbm{1}_{A_i}*2^j\mathbbm{1}_{A_j}\|_2
 \leq \|2^i\mathbbm{1}_{A_i}\|_1 \|2^j\mathbbm{1}_{A_j}\|_2
 \ll 2^i|A_i| \|\mu_{\delta}\|_2.
\end{equation}
This implies,
\begin{equation}\label{seqq}
2^i |A_i| = \delta^{O(\epsilon)}\quad \mbox{and similarly}\quad 2^j|A_j|=\delta^{O(\epsilon)}.
\end{equation}
So we have the following lower bound on the multiplicative energy of $A_i$ and $A_j$:
\begin{align*}
E_\delta(A_i,A_j) & \gg \delta^{-3d}\|\mathbbm{1}_{A_i}*\mathbbm{1}_{A_j}\|_2^2\\
& \geq \delta^{-3d+O(\epsilon)}2^{-2i-2j}\|\mu_\delta\|_2^2\\
& \geq \delta^{-3d+O(\epsilon)} 2^{-i-j}|A_i|^{\frac{1}{2}}|A_j|^{\frac{1}{2}} = \delta^{O(\epsilon)} N(A_i,\delta)^{\frac{3}{2}}N(A_j,\delta)^{\frac{3}{2}}.
\end{align*}
By Theorem~\ref{bsgt}, there exists a $\delta^{-O(\epsilon)}$-approximate subgroup $\tilde{H}$ and elements $x,y$ in $G$ such that
\begin{equation}\label{smallh}
N(\tilde{H},\delta) \leq \delta^{-O(\epsilon)} N(A_i,\delta)^{\frac12} N(A_j,\delta)^{\frac12},
\end{equation}
\begin{equation}\label{hclosetoa}
N(x\tilde{H}\cap A_i,\delta) \geq \delta^{O(\epsilon)} N(A_i,\delta) \quad \mbox{and}\quad N(\tilde{H}y\cap A_j,\delta) \geq \delta^{O(\epsilon)} N(A_j,\delta).
\end{equation}
We may replace $\tilde{H}$ by its $\delta$-neighborhood, and then, $\mu_\delta(x\tilde{H})\geq\delta^{O(\epsilon)}$. Let $U$ be a neighborhood of the identity in $G$ as in Theorem~\ref{sigmakappa}, let $r>0$ be such that $B(1,2r)\subset U$, and cover $x\tilde{H}$ by $O(1)$ balls of radius $r$. One of these balls $B$ must satisfy $\mu_\delta(x\tilde{H}\cap B)\geq\delta^{O(\epsilon)}$ and thus,
$$\mu_\delta*\mu_\delta(\tilde{H}^2\cap U) \geq \mu_\delta(\tilde{H}x^{-1}\cap B^{-1})\mu_\delta(x\tilde{H}\cap B) \geq \delta^{O(\epsilon)}.$$
On the other hand, by (\ref{seq}) and (\ref{seqq}),
$$\delta^{O(\epsilon)}\|\mu_\delta\|_2 \leq \|2^i\mathbbm{1}_{A_i}\|_1\|2^j\mathbbm{1}_{A_j}\|_2 \ll \|2^j\mathbbm{1}_{A_j}\|_2 \leq \delta^{-O(\epsilon)}2^{j/2},$$
so that $2^j\geq\delta^{-\alpha+O(\epsilon)}$ and similarly $2^i\geq\delta^{-\alpha+O(\epsilon)}$. This implies
$$N(A_j,\delta)\leq\delta^{-d+\alpha-O(\epsilon)} \quad\mbox{and similarly}\quad N(A_i,\delta)\leq\delta^{-d+\alpha-O(\epsilon)}.$$
The set $\tilde{H}$ is a $\delta^{-O(\epsilon)}$-approximate subgroup, so $N(\tilde{H}^2,\delta)\leq\delta^{-O(\epsilon)}N(\tilde{H},\delta)$. Recalling Inequality (\ref{smallh}), we find
$$N(\tilde{H}^2\cap U,\delta) \leq N(\tilde{H}^2,\delta) \leq \delta^{-d+\alpha-O(\epsilon)}.$$
On the other hand, $\mu_\delta*\mu_\delta(\tilde{H}^2\cap U)\geq \delta^{O(\epsilon)}$ so the second assumption on $\mu_\delta$ forces, for any $\rho\geq\delta$ (note that any ball of radius $\rho$ is included in the $\rho$-neighborhood of some proper closed connected subgroup),
$$N(\tilde{H}^2\cap U,\rho) \geq \rho^{-\kappa}\delta^{O(\epsilon)}.$$
Thus, provided we have chosen $\epsilon>0$ small enough, the set $\tilde{H}^2\cap U$ satisfies the assumptions of Theorem~\ref{sigmakappa}, and so must be included in the $\delta^{\tau}$-neighborhood of a proper closed connected subgroup $H$ of $G$, contradicting the assumption $\mu*\mu(H^{(\delta^\tau)})\leq \delta^{-\epsilon}\delta^{\kappa\tau}$.
\end{proof}

The idea is now to apply repeatedly that Flattening Lemma to obtain:

\begin{lemma}\label{little}
Let $\mu$ be a symmetric almost Diophantine measure on a connected compact simple Lie group $G$.
There exists a constant $C_0=C_0(\mu)$ such that for any $\delta=e^{-C_0n}>0$ small enough,
$$\|(\mu^{*C_0\log\frac{1}{\delta}})_\delta\|_2 \leq \delta^{-\frac{1}{4}}.$$
\end{lemma}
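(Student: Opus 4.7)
My plan is to iterate the $L^2$-flattening Lemma~\ref{flattening} in the Bourgain--Gamburd bootstrap style. Starting from the trivial bound $\|(\mu^{*m_0})_\delta\|_2 \ll \delta^{-d/2}$ (coming from $\|P_\delta\|_2 \ll \delta^{-d/2}$), each application of flattening reduces the $L^2$-norm by a factor of $\delta^\epsilon$, so a bounded number $K_0 = O(1)$ of doublings will suffice to bring the norm below $\delta^{-1/4}$. Concretely, I fix $\alpha = 1/2$ and choose $\kappa > 0$ sufficiently small (to be specified), which determines $\epsilon = \epsilon(1/2,\kappa) > 0$ and $K_0 := \lceil (d/2 - 1/4)/\epsilon \rceil$. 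Define $\nu_k := \mu^{*m_k}$ with $m_k = 2^k m_0$, set $n := m_{K_0}$, and $\delta = e^{-C_0 n}$ for a constant $C_0 = C_0(\mu)$ to be chosen below.

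At step $k$, either $\|(\nu_k)_\delta\|_2 < \delta^{-1/4}$ and the iteration halts, or Lemma~\ref{flattening} applies to the symmetric measure $\nu_k$ and yields $\|(\nu_k)_\delta * (\nu_k)_\delta\|_2 \leq \delta^\epsilon \|(\nu_k)_\delta\|_2$. Since the bi-invariant radial kernel $P_\delta$ commutes with convolution by any measure, $(\nu_k)_\delta * (\nu_k)_\delta = \nu_{k+1} * P_\delta * P_\delta$; and a standard comparison between smoothings at scales $\delta$ and $2\delta$ (covering $B(1,2\delta)$ by $O(1)$ balls of radius $\delta$) shows that $\|(\nu_{k+1})_\delta\|_2$ and $\|\nu_{k+1} * P_\delta * P_\delta\|_2$ are comparable up to an $O(1)$ factor depending only on $G$. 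Hence $\|(\nu_{k+1})_\delta\|_2 \ll \delta^\epsilon \|(\nu_k)_\delta\|_2$, and after $K_0$ iterations (absorbing the bounded multiplicative constants) we obtain $\|(\nu_{K_0})_\delta\|_2 \ll \delta^{-d/2 + K_0 \epsilon} \leq \delta^{-1/4}$, as desired.

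The main obstacle is verifying the non-concentration hypothesis of Lemma~\ref{flattening} at each step. Inspection of its proof shows that the hypothesis is in fact only used at the single scale $\rho = \delta^\tau$, with $\tau = \tau(1/2,\kappa)$ as in Theorem~\ref{sigmakappa}: we need
$$\mu^{*2 m_k}(H^{(\delta^\tau)}) \leq \delta^{\kappa\tau - \epsilon}.$$
The almost Diophantine property applied at level $2m_k$ gives $\mu^{*2m_k}(H^{(e^{-2C_1 m_k})}) \leq e^{-2c_2 m_k}$, and by monotonicity the same bound applies at $\rho = \delta^\tau$ as soon as $\delta^\tau \leq e^{-2 C_1 m_k}$, i.e.\ $m_k \leq C_0 \tau n/(2 C_1)$; the corresponding requirement $e^{-2 c_2 m_k} \leq \delta^{\kappa\tau - \epsilon}$ becomes vacuous once $\kappa\tau \leq \epsilon$. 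Therefore, choosing $\kappa$ so small that $\kappa\tau \leq \epsilon$ and then $C_0 \geq 2C_1/\tau$ (so that even the largest index $m_{K_0} = n$ still satisfies the upper constraint), the hypothesis is met at every step $k = 0,\ldots,K_0$. Finally, since $m_0 = n/2^{K_0} \sim \log(1/\delta)/(C_0 2^{K_0}) \to \infty$ as $\delta \to 0$, the ``$n$ large enough'' threshold in almost Diophantine is eventually satisfied, and the iteration goes through for all sufficiently small $\delta$.
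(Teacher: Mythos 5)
The overall strategy is the right one (iterate the $L^2$-flattening Lemma a bounded number of times, with $\alpha$ fixed at $1/2$ and the trivial bound $\delta^{-d/2}$ as the starting point), and in this respect your proof matches the paper's. However, the step where you verify the non-concentration hypothesis (condition~2) of Lemma~\ref{flattening} has a genuine gap.

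You assert that ``inspection of its proof shows that the hypothesis is in fact only used at the single scale $\rho = \delta^\tau$.'' That is a misreading. In the proof of Lemma~\ref{flattening}, condition~2 is used twice: once at the end to get the contradiction at scale $\delta^\tau$, but also earlier, \emph{at every scale} $\rho\geq\delta$, in order to establish that $N(\tilde{H}^2\cap U,\rho)\geq\rho^{-\kappa}\delta^{O(\epsilon)}$, which is exactly the second hypothesis of the Product Theorem~\ref{sigmakappa} (any ball of radius $\rho$ is contained in the $\rho$-neighborhood of a proper closed connected subgroup, so concentration of $\mu*\mu$ on a ball at scale $\rho$ is controlled by condition~2 at scale $\rho$). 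So you really do need to verify $\mu^{*2m_k}(H^{(\rho)})\leq\delta^{-\epsilon}\rho^\kappa$ for all $\rho\geq\delta$, not just at $\rho=\delta^\tau$; knowing it at $\rho=\delta^\tau$ does not imply it for $\rho<\delta^\tau$, since $\rho^\kappa$ shrinks as $\rho$ decreases.

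There is a second, related difficulty: you propose to ``choose $\kappa$ so small that $\kappa\tau\leq\epsilon$.'' But both $\tau=\tau(1/2,\kappa)$ and $\epsilon=\epsilon(1/2,\kappa)$ are outputs of Theorem~\ref{sigmakappa}/Lemma~\ref{flattening} that depend on $\kappa$, and as $\kappa\to 0$ the hypotheses of Theorem~\ref{sigmakappa} weaken, so $\tau$ and $\epsilon$ are expected to degenerate. The inequality $\kappa\tau(\kappa)\leq\epsilon(\kappa)$ is not self-evidently achievable, and nothing in the paper provides the quantitative control that would justify it. The paper sidesteps both issues entirely: it fixes $\kappa=c_2/(3C_1)$ once and for all, and then establishes condition~2 at \emph{all} scales $\rho\in[\delta,\delta_0]$ by a short multi-scale argument that is specific to symmetric measures. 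The two ingredients, which your proof does not use, are the coset-to-subgroup inequality $m(xH^{(\delta)})^2\leq m*m(H^{(2\delta)})$ (valid for symmetric $m$), which converts a bound on subgroup neighborhoods into a bound on \emph{coset} neighborhoods at the cost of one extra convolution, and the inequality $m*m'(H^{(\rho)})\leq\sup_x m'(xH^{(\rho)})$, which then lets one transfer the bound from the single scale $\rho$ matched to the power $\log(1/\rho)$ to all larger powers. You will need some version of this argument (or an equivalent one) to make the verification of condition~2 go through.
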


\begin{remark}
The constant $\frac{1}{4}$ could be replaced in this lemma by any fixed positive constant $\alpha$. Of course, $C_0$ would then depend on $\alpha$. 
\end{remark}

\begin{proof}
We first check that a suitable power $\nu=\mu^{c\log\frac{1}{\delta}}$ satisfies the second condition of Lemma~\ref{flattening}. Since $\mu$ is almost Diophantine, taking $n=\frac{1}{C_1}\log\frac{1}{\delta}$ in Equation~(\ref{wd}) shows that when $\delta<\delta_0$, for any proper closed connected subgroup $H$,
$$\mu^{*\frac{1}{C_1}\log\frac{1}{\delta}}(H^{(\delta)}) \leq \delta^{\frac{c_2}{C_1}}.$$
If $xH$ is a left coset of a closed subgroup $H$ and $m$ any symmetric measure, we have
$$m(xH^{(\delta)})^2 \leq m*m(H^{(2\delta)}).$$
Therefore, denoting $c=\frac{1}{4C_1}$ and $\kappa=\frac{c_2}{3C_1}$, we have, for all $\delta<\delta_0$, for any left coset $xH$ of a proper closed connected subgroup,
$$\mu^{*2c\log\frac{1}{\delta}}(xH^{(\delta)}) \leq \delta^{\kappa}.$$
Now, if $H$ is a closed subgroup and $m$ and $m'$ are any two probability measures on $G$, we have
$$m*m'(H^{(\delta)}) \leq \sup_{x\in G} m'(xH^{(\delta)}).$$
Therefore, if $\delta<\rho<\delta_0$, we have, for any proper closed connected subgroup $H$,
$$\mu^{*2c\log\frac{1}{\delta}}(H^{(\rho)}) \leq \max_x \mu^{*2c\log\frac{1}{\rho}}(xH^{(\rho)}) \leq \rho^\kappa.$$
In other terms, for $\delta>0$ small enough, the measure $\nu:=\mu^{*c\log\frac{1}{\delta}}$ satisfies the second condition of Lemma~\ref{flattening}.\\
We now apply Lemma~\ref{flattening} repeatedly, starting with the measure $\nu$. If $\|\nu_\delta\|_2\leq \delta^{-\frac{1}{4}}$, then we have what we want. Otherwise, Lemma~\ref{flattening} applied to $\nu_\delta$ with $\alpha=\frac{1}{2}$ shows that
$$\|(\nu*\nu)_\delta\|_2 \ll \|\nu_\delta*\nu_\delta\|_2 \leq \delta^\epsilon \|\nu_\delta\|_2.$$
We then repeat the same procedure, replacing $\nu$ by $\nu*\nu$, and so on (note that the computations made above for $\nu$ also show that all the convolution powers of $\nu$ will satisfy the second condition of Lemma~\ref{flattening}). After at most $\frac{d}{\epsilon}$ iterations, the procedure must stop, i.e. we must have,
$$\|(\mu^{*C_0\log\frac{1}{\delta}})_\delta\|_2 = \|(\nu^{*2^{\frac{d}{\epsilon}}})_\delta\|_2 \leq \delta^{-\frac{1}{4}}.$$
\end{proof}

The end of the proof of Theorem~\ref{adsg} relies on the high-multiplicity of irreducible representations in the regular representation $L^2(G)$. Recall that the irreducible representations of $G$ are in bijection with dominant analytically integral weights (see e.g. \cite{knapplg}). We denote by $\pi_\lambda$ the irreducible representation of $G$ with highest weight $\lambda$. If $\mu$ is a finite Borel measure on $G$, the Fourier coefficient of $\mu$ at $\lambda$ is
$$\hat{\mu}(\lambda) = \int_G \pi_\lambda(g)\,d\mu(g).$$
By Lemma~\ref{little}, all we need to show is the following.

\begin{lemma}
Let $\mu$ be a Borel probability measure on a compact semisimple Lie group $G$ such that for some constant $C$, for all $\delta=e^{-Cn}>0$ small enough ($n$ a positive integer),
$$\|(\mu^{*C\log\frac{1}{\delta}})_\delta\|_2 \leq \delta^{-\frac{1}{4}}.$$
Then $\mu$ has a spectral gap in $L^2(G)$.
\end{lemma}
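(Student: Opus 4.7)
The plan is to pass to the Peter-Weyl side and argue by contradiction. Since $T_\mu$ commutes with the left regular representation, it preserves each isotypic component $H_\lambda\simeq V_\lambda\otimes V_\lambda^*$ in the decomposition $L^2(G)=\bigoplus_\lambda H_\lambda$, acting on $H_\lambda$ with operator norm $\|\hat\mu(\lambda)\|_{op}$, where $\hat\mu(\lambda)=\int_G\pi_\lambda(g)\,d\mu(g)$. Consequently the spectral radius of $T_\mu$ on $L^2_0(G)$ equals $\sup_{\lambda\neq 0}\|\hat\mu(\lambda)\|_{op}$, and it suffices to show this supremum is strictly less than $1$. By Remark~\ref{symmetric} we may assume $\mu$ is symmetric, so each $\hat\mu(\lambda)$ is Hermitian with spectrum in $[-1,1]$.

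Suppose for contradiction that $\sup_{\lambda\neq 0}\|\hat\mu(\lambda)\|_{op}=1$. Plancherel yields
$$
\|\nu_\delta\|_2^2 = 1 + \sum_{\lambda\neq 0} d_\lambda \|\hat\mu(\lambda)^k\,\hat P_\delta(\lambda)\|_{HS}^2 \leq \delta^{-1/2},
$$
for $\nu=\mu^{*k}$ and $k=C\log(1/\delta)$. The key step is to isolate a single $\lambda$ whose individual contribution already exceeds $\delta^{-1/2}$. If $v\in V_\lambda$ is a unit top eigenvector of $\hat\mu(\lambda)$ with eigenvalue $\theta_\lambda$, then the Taylor expansion of $\pi_\lambda$ at the identity (using the standard bound $\|d\pi_\lambda(X)\|_{op}\ll \|X\|\|\lambda\|$) gives $\hat P_\delta(\lambda)=I+O(\delta\|\lambda\|)$, so that whenever $\delta\|\lambda\|$ is sufficiently small,
$$
\|\hat\mu(\lambda)^k\hat P_\delta(\lambda)\|_{HS}^2 \geq \|\hat\mu(\lambda)^k\hat P_\delta(\lambda)v\|^2 \geq \tfrac14\,\theta_\lambda^{2k}.
$$
Selecting $\lambda$ with $1-|\theta_\lambda|\leq 1/(2k)$ makes $\theta_\lambda^{2k}\geq 1/4$, whence $\|\nu_\delta\|_2^2\gg d_\lambda$. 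By the Weyl dimension formula $d_\lambda\gg\|\lambda\|$, so choosing $\|\lambda\|$ large enough (while keeping $\delta\|\lambda\|$ small) forces $d_\lambda\gg\delta^{-1/2}$, the required contradiction.

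Producing such a $\lambda$ splits into two cases. If the supremum is attained at some $\lambda_0\neq 0$, then $\|\hat\mu(\lambda_0)\|_{op}=1$, and the equality case of the unitary triangle inequality, combined with Hermiticity of $\hat\mu(\lambda_0)$, forces $\pi_{\lambda_0}(g)v=\pm v$ for $\mu$-a.e.\ $g$ with a single sign. Consequently $\Supp\mu^{*k}$ is contained in a finite union of cosets of the stabilizer of $v$, which is a proper closed subgroup, so $(\mu^{*k})_\delta$ lives in a set of Haar volume $O(\delta)$, and Cauchy-Schwarz yields $\|\nu_\delta\|_2\gg\delta^{-1/2}$, contradicting the hypothesis for small $\delta$. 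Otherwise the supremum is unattained and there is a sequence $\lambda_j$ with $\|\lambda_j\|\to\infty$ and $\eta_j:=1-\|\hat\mu(\lambda_j)\|_{op}\to 0$; the main obstacle is then to coordinate the three scales $\eta_j$, $\|\lambda_j\|$, and $\delta_j\in\{e^{-Cn}\}$ so as to satisfy $\eta_j\leq 1/(2k_j)$, $\delta_j\|\lambda_j\|$ small, and $d_{\lambda_j}\gg\delta_j^{-1/2}$ simultaneously. This coordination is completed by a diagonal extraction exploiting the infinitude of $\{\lambda:\eta_\lambda\leq\eta_0\}$ for each $\eta_0>0$, together with the geometric density of the admissible scales $\{e^{-Cn}\}$.
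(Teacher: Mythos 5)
Your setup is right—the isotypic decomposition, the fact that the spectral radius of $T_\mu$ on $L^2_0(G)$ is $\sup_{\lambda\neq0}RS(\hat\mu(\lambda))$, the estimate $\hat P_\delta(\lambda)=I+O(\delta\|\lambda\|)$, and the Weyl dimension bound $d_\lambda\gg\|\lambda\|$ are all the correct ingredients, and your Plancherel identity is the paper's equation as well. The problem is the final step, which you frame as a contradiction argument and leave as a ``coordination'' and ``diagonal extraction''; this step has a genuine gap.

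To make $\theta_\lambda^{2k}\geq 1/4$ you need $\eta_\lambda\leq 1/(2k)$ with $k=C\log(1/\delta)$. At the same time you need $\delta\|\lambda\|$ small (for the Taylor approximation of $\hat P_\delta$) and $d_\lambda\gg\|\lambda\|\gg\delta^{-1/2}$ (for the contradiction). Together these force $\delta$ into the window $[\,c'\|\lambda\|^{-2},\,c\|\lambda\|^{-1}\,]$, hence $\log(1/\delta)\asymp\log\|\lambda\|$, and therefore you are demanding $\eta_\lambda\ll 1/\log\|\lambda\|$. Nothing in the hypothesis guarantees this rate relation: the set $\{\lambda:\eta_\lambda\leq\eta_0\}$ is infinite for each $\eta_0$, but it need not meet the specific annulus $\delta^{-1/2}\ll\|\lambda\|\ll\delta^{-1}$ that the other two constraints pin down. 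The ``diagonal extraction'' does not resolve this, because you cannot choose $\delta$ and $\lambda$ independently: once $\|\lambda\|$ is fixed, the admissible $\delta$'s are confined to a window of bounded multiplicative width, and the inequality $\eta_\lambda\log\|\lambda\|\ll 1$ must hold for that very $\lambda$.

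The paper's proof sidesteps all of this by never insisting that $\theta_\lambda^{2k}$ be bounded below. From Plancherel and the hypothesis one has, for \emph{every} $\lambda$ with $\|\lambda\|\leq c\delta^{-1}$,
$$
\delta^{-1/2}\geq \tfrac14\,(\dim V_\lambda)\,\|\hat\mu(\lambda)^{\,k}\|_{op}^2,
$$
so taking the particular $\delta=e^{-Cn}$ with $e^{-C}c\delta^{-1}\leq\|\lambda\|\leq c\delta^{-1}$ and using $\dim V_\lambda\gg\|\lambda\|\gg\delta^{-1}$ gives $\|\hat\mu(\lambda)^{\,k}\|_{op}\ll\delta^{1/4}$. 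The elementary inequality $RS(T)\leq\|T^k\|_{op}^{1/k}$ then yields $RS(\hat\mu(\lambda))\leq K^{1/k}e^{-1/(4C)}$, uniformly for all large $\lambda$; the finitely many small $\lambda\neq 0$ each give $RS(\hat\mu(\lambda))<1$ since the hypothesis forces $\mu$ to be adapted. This is a direct uniform bound rather than a contradiction argument, and it requires no assumption about how fast $\eta_\lambda$ tends to zero. If you replace your requirement $\eta_\lambda\leq 1/(2k)$ by this use of $RS(T)\leq\|T^k\|_{op}^{1/k}$, the rest of your computation closes the argument.
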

\begin{proof}
Since the representation $V_\lambda$ occurs in $L^2(G)$ with multiplicity $\dim V_\lambda$, the Parseval Formula for $(\mu^{*C\log\frac{1}{\delta}})_\delta$ gives
\begin{equation}\label{parseval}
\|(\mu^{*C\log\frac{1}{\delta}})_\delta\|_2^2  = \sum_\lambda (\dim V_\lambda)\|\hat{\mu}(\lambda)^{C\log\frac{1}{\delta}}\hat{P_\delta}(\lambda)\|_{HS}^2,
\end{equation}
where $\|\cdot\|_{HS}$ is the Hilbert-Schmidt norm.
Moreover, it is easily seen that we may bound the distance (in operator norm) from $\hat{P_\delta}(\lambda)$ to the identity (see for instance \cite[Lemme~3.1]{saxcetroudimensionnel}): for some constant $c>0$ depending only on $G$, we have, whenever $\|\lambda\|\leq c\delta^{-1}$,
$$\|\hat{P_\delta}(\lambda)-Id_{V_\lambda}\|_{op} \leq \frac{1}{2}.$$
Therefore for any $\lambda$ such that $\|\lambda\|\leq c\delta^{-1}$, using (\ref{parseval}) and the assumption of the lemma,
\begin{equation}\label{ten}
\delta^{-\frac{1}{2}} \geq \frac{1}{4} (\dim V_\lambda)\|\hat{\mu}(\lambda)^{C\log\frac{1}{\delta}}\|_{op}^2.
\end{equation}
Now, as a consequence of the Weyl dimension Formula, we have, for some constant $c$ depending only on $G$, for any representation $V_\lambda$ with highest weight $\lambda$ \cite[Lemme~3.2]{saxcetroudimensionnel},
$$\dim V_\lambda \geq c\|\lambda\|.$$
Taking $\lambda$ with $e^{-C}c\delta^{-1}\leq\|\lambda\|\leq c\delta^{-1}$ in the above equation~(\ref{ten}), we find
$$\|\hat{\mu}(\lambda)^{C\log\frac{1}{\delta}}\|_{op}^2 \ll \delta^{\frac{1}{2}}.$$
However, the spectral radius of an operator $T$ satisfies, for any integer,
$$RS(T) \leq \|T^n\|_{op}^{\frac{1}{n}},$$
so that for some absolute constant $K$, we have
\begin{align*}
RS(\hat{\mu}(\lambda)) 
& \leq (K\delta^{\frac{1}{4}})^{\frac{1}{C\log\frac{1}{\delta}}}\\
& = e^{-\frac{1}{4C}} K^{\frac{1}{C\log\frac{1}{\delta}}}
\end{align*}
which is bounded away from $1$ as long as $\delta$ is sufficiently small, i.e. as long as $\lambda$ is sufficiently large. As the spectral radius of $T_\mu$ in $L^2_0(G)$ is equal to the supremum of all $RS(\hat{\mu}(\lambda))$ for $\lambda\neq 0$, this finishes the proof.
\end{proof}

\section{Measures supported on algebraic elements}\label{algebraic}

In this section, we fix a basis for the Lie algebra $\mathfrak{g}$. We say that an element $g\in G$ is \emph{algebraic} if the entries of the matrix of $\Ad g$ in that fixed basis are algebraic numbers. Recall that a probability measure on $G$ is called \emph{adapted} if its support generates a dense subgroup of $G$. We want to prove the following.

\begin{theorem}\label{algebraicspectralgap}
Let $G$ be a connected compact simple Lie group. If $\mu$ is an adapted probability measure on $G$ whose support consists of algebraic elements, then $\mu$ has a spectral gap.
\end{theorem}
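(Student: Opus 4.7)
The plan is to establish the almost Diophantine property for $\mu$, after which the spectral gap follows immediately from Theorem~\ref{adsg}. So I must show that, for some $C_1,c_2>0$ and all sufficiently large $n$, uniformly over proper closed connected subgroups $H\subsetneq G$,
$$\mu^{*n}(H^{(e^{-C_1n})})\le e^{-c_2n}.$$
A first reduction observes that, $G$ being compact simple, its maximal proper closed connected subgroups fall into finitely many conjugacy classes, and every proper closed connected subgroup sits inside a maximal one. So it suffices to fix a representative $H_0$ of each class and treat the conjugates $H=gH_0g^{-1}$ uniformly. Choosing a unit generator $u_H\in\bigwedge^{\dim H_0}\g_H\subset V:=\bigwedge^{\dim H_0}\g$, the event $g_1\cdots g_n\in H^{(\rho)}$ reads $d(\Ad(g_1\cdots g_n)\cdot[u_H],[u_H])=O(\rho)$ in $\mathbb{P}(V)$, so the question becomes one about the adjoint random walk on a compact $\Ad G$-orbit inside $\mathbb{P}(V)$.

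The heart of the argument is to leverage algebraicity via non-archimedean random matrix products. Fix a number field $K$ such that $\Ad\Supp\mu\subset GL(\g_K)$, and set $\Gamma:=\langle\Ad\Supp\mu\rangle$. Adaptedness and simplicity of $\g$ imply $\Gamma$ is Zariski dense in the complexification $\bG$, while $\Gamma\cap GL(\mathcal{O}_K)$ is the intersection of the compact group $\Ad G$ with the discrete group $GL(\mathcal{O}_K)$, hence finite. Since $\Gamma$ itself is infinite (being dense in the positive-dimensional Lie group $\Ad G$), it must be unbounded in $GL(\g_{K_v})$ for at least one non-archimedean place $v$ of $K$ (and bounded for all but finitely many). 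At every such $v$ the Zariski density descends to $\bG(K_v)$, and the theory of random matrix products over the local field $K_v$ developed in~\cite{benoistquintwalk} provides a simple positive top Lyapunov exponent $\lambda_v>0$ for the walk on $\g_{K_v}$ (and on each exterior power), a unique $\mu$-stationary measure on the associated projective space, and exponential large deviation estimates around the Lyapunov averages.

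The archimedean and non-archimedean pictures are glued by the product formula
$$\sum_w \log\|\Ad(g_1\cdots g_n)\cdot u\|_w=\sum_w\log\|u\|_w,\qquad u\in V_K.$$
At the archimedean place $\Ad G$ is compact, so all Lyapunov exponents vanish; the event $g_1\cdots g_n\in H^{(e^{-C_1n})}$ nevertheless forces an exponential archimedean contraction $\|\Ad(g_1\cdots g_n)u_H\|_\infty \le e^{-C_1n}\|u_H\|_\infty$. By the product formula this contraction must be compensated by anomalous $v$-adic expansion, of a magnitude which the large deviation bound at $v$ excludes outside a set of $\mu^{*n}$-mass $\le e^{-cn}$, provided $C_1$ is chosen small compared with $\lambda_v$. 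Taking the minimum over the finitely many bad places $v$ and the finitely many conjugacy classes of $H_0$ then yields the almost Diophantine inequality.

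The main obstacle is precisely that the target line $[u_H]$ is not $K$-rational in general — only $\Ad\Gamma$ is — so the product formula cannot be applied pointwise to an arbitrary $u_H$. To circumvent this one must formulate the large deviation estimates for the full matrix product $\Ad(g_1\cdots g_n)$, i.e.\ for its Cartan projection / singular values over $K_v$, and deduce from these uniform control over all target lines in the compact $\Ad G$-orbit. Extending and applying the Benoist-Quint machinery in this uniform, multi-place $K_v$ setting is the main new technical ingredient of the section; once this quantitative non-archimedean input is in place, the reductions above, the product formula, and the bookkeeping of finitely many conjugacy classes are routine.
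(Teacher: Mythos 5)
Your overall plan is the same as the paper's: reduce to showing the almost Diophantine inequality and then invoke Theorem~\ref{adsg}. But the core argument you propose -- an archimedean/non-archimedean compensation via the product formula -- has genuine gaps, and the paper actually does something quite different.

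First, the claimed ``exponential archimedean contraction $\|\Ad(g_1\cdots g_n)u_H\|_\infty\le e^{-C_1n}\|u_H\|_\infty$'' does not hold. Since $\Ad G$ is compact, $\|\Ad(g)u_H\|_\infty\simeq\|u_H\|_\infty$ for all $g$; the event $g_1\cdots g_n\in H^{(e^{-C_1n})}$ only forces $\Ad(g_1\cdots g_n)u_H$ to lie \emph{near the line} $\R u_H$, i.e.\ $\|\Ad(g_1\cdots g_n)u_H \mp u_H\|_\infty$ is small, not the norm itself. This kills the proposed compensation: the product formula controls $\prod_w\|\Ad(g_1\cdots g_n)\cdot u\|_w$ for a fixed $K$-rational $u$, and there is no archimedean deficit to compensate. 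Passing instead to the difference vector $\Ad(g_1\cdots g_n)u_H-u_H$ does not help either, because $u_H$ is not $K$-rational -- the closed subgroup $H$ is arbitrary. You acknowledge the non-rationality of $[u_H]$ and propose to pass to Cartan projections / singular values of $\Ad(g_1\cdots g_n)$ at the places $v$, but those carry no information about proximity to the specific, non-rational line $[u_H]$; and at the archimedean place the singular values are all $1$, so there is again nothing to offset. The ``main new technical ingredient'' you defer is, in effect, the entire problem.

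What the paper does instead is split the task into two genuinely separate steps. Step one (Proposition~\ref{neighborhood}) converts the \emph{approximate} membership $g\in H_{L_0}^{(e^{-C_1n})}$, for $g\in B_\Gamma(n)$, into \emph{exact} membership in some proper closed subgroup $H_1$, via the effective arithmetic Nullstellensatz of Masser--Wüstholz: if the system of stabilizer equations associated to the words of length $\le n$ had no common zero, a controlled Bézout identity would yield a nonzero algebraic integer of height $\le q^{Cn}$ and modulus $\le q^{Cn}e^{-C_1n}$, which is impossible once $C_1$ is large. This is where the algebraicity of the support is actually used, and it has no analogue in your sketch. Step two (Proposition~\ref{subgroups}) then shows $\mu^{*n}(H_1)\le e^{-\kappa n}$, and here the random-matrix-products input from \cite{benoistquintwalk} does enter -- but not via the adjoint representation directly: the adjoint action need not be proximal over any local field, and the paper must construct, depending on the conjugacy class of $H_1$, a tailored irreducible representation $\mathcal{S}_X$ over a suitable local field on which $\Gamma$ acts proximally and strongly irreducibly while $\Gamma\cap H_1$ stabilizes a proper subspace (Lemmas~\ref{goodrepresentation}, \ref{faces}, \ref{nonirreducible}). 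Your sketch assumes proximality of the adjoint walk over some $K_v$ from the start, which is precisely the non-trivial point. So: same reduction, but the two essential ingredients -- the Nullstellensatz step to upgrade neighborhoods to exact subgroups, and the careful construction of proximal representations -- are both missing.
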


\begin{remark} We have already explained in Remark~\ref{symmetric} that it is enough to prove such a theorem for a symmetric measure $\mu$. Moreover, if $\mu$ is symmetric, under the assumptions of the theorem, we may always find a symmetric finitely supported adapted measure $\nu$ that is absolutely continuous with respect to $\mu$. It is readily seen that if $\nu$ has a spectral gap, then so has $\mu$, so we may assume in the proof of Theorem~\ref{algebraicspectralgap} that $\mu$ is finitely supported.
\end{remark}

The proof has two parts. First, we show that, given a proper closed subgroup $H$, the probability $\mu^{*n}(H)$ decays exponentially, with a rate that does not depend on $H$. This part is based on the theory of product of random matrices, as developed by Furstenberg, Guivarc'h and others; the central input is Theorem~\ref{decay} below. The difficult point in the proof is to reduce to the case where the subgroup generated by the support of $\mu$ acts proximally. While writing this paper, we learnt from Emmanuel Breuillard that an alternative approach was to derived an improved version of Theorem~\ref{decay} that applies also to some non-proximal representations \cite{breuillardsuperstrong}. Some partial results on this issue were also obtained previously by Aoun \cite{aountransience}. 

Then, we show that when the support of $\mu$ consists of algebraic elements, the measure $\mu$ is almost Diophantine. This second part is based on an application of the effective arithmetic Nullstellensatz, and relies crucially on the algebraic assumption on the elements of the support of $\mu$.

\subsection{Transience of closed subgroups}

We want to prove the following.

\begin{proposition}\label{subgroups}
Let $\mu$ be an adapted finitely supported symmetric probability measure on a connected compact simple Lie group $G$. Then, there exists a constant $\kappa=\kappa(\mu)$ such that for $n\geq n_0$, for any proper closed subgroup $H<G$,
$$\mu^{*n}(H) \leq e^{-\kappa n}.$$
\end{proposition}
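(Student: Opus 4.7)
The plan is to reduce the statement to Theorem~\ref{decay}, a Furstenberg--Guivarc'h-type decay estimate for products of random matrices on vector spaces over local fields. First I would bound $\mu^{*n}(H)$ by an ``escape from a fixed subspace'' probability for a linear representation. If $H$ has positive dimension, write $\h_H$ for its Lie algebra: any $g\in H$ satisfies $\Ad(g)\h_H = \h_H$, so $H\subset \Stab_G(\h_H)$ and thus $\mu^{*n}(H)\leq \mu^{*n}(\Stab_G(\h_H))$. Passing to the $k$-th exterior power of the adjoint representation, with $k=\dim\h_H$, the condition $\Ad(g)\h_H=\h_H$ becomes that $g$ fixes the line $[\bigwedge^k\h_H]$ in the projective space $\mathbb{P}(\bigwedge^k\g_\C)$, so the problem becomes one of exponential escape from a fixed projective line under the image of the walk. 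Since $\dim\h_H$ takes only finitely many values and the decay provided by Theorem~\ref{decay} will be uniform in the initial line, a single constant $\kappa$ will suffice.

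The main difficulty is that, $G$ being compact, no element of $G$ acts proximally on $\bigwedge^k\g_\C$: all eigenvalues of $\Ad(g)$ lie on the unit circle. To remedy this I would exploit that $\Gamma=\langle\Supp(\mu)\rangle$ is finitely generated, so the matrix entries of its adjoint representation generate a finitely generated subfield $k_0\subset\C$. One can choose a non-archimedean absolute value $v$ on $k_0$ for which the image of $\Gamma$ is unbounded in $G_\C(k_v)$, where $k_v$ is the completion; since $\Gamma$ is dense in $G$ it is also Zariski dense in $G_\C$, so its image in $G_\C(k_v)$ is Zariski dense as well. Choosing a highest-weight representation $V_v$ of $G_\C$ over $k_v$ on which $G_\C(k_v)$ acts proximally, we obtain a strongly irreducible and proximal representation of $\Gamma$ on $V_v$, to which Theorem~\ref{decay} applies. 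This yields an exponential bound on the probability that the walk $\iota_v(S_n)$ fixes the image of $[\bigwedge^k\h_H]$ in $\mathbb{P}(V_v)$, and since the event $\Ad(S_n)\h_H=\h_H$ descends from $G_\C(k_v)$ to $G$ under the embedding $\iota_v$, this gives the proposition for $\dim H>0$.

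For finite proper subgroups $H$ the previous argument applies whenever $H$ is contained in a maximal proper closed subgroup of positive dimension. The remaining case concerns the finitely many conjugacy classes of finite maximal subgroups of $G$ not contained in any positive-dimensional proper subgroup; their cardinality is bounded by a constant $M=M(G)$ by standard structure theory for compact Lie groups. Since $\Gamma$ is Zariski dense in the simple group $G_\C$ it contains a nonabelian free subgroup by the Tits alternative, hence is non-amenable, and Kesten's theorem yields $\mu^{*n}(\{g\})\leq \rho^n$ uniformly in $g$ for some $\rho<1$; summing over the at most $M$ elements of such an $H$ completes this case. The hardest step is the construction of the non-archimedean embedding with unbounded image: this is precisely the ``reduction to the proximal case'' flagged at the start of this section, and it is what Breuillard's super-strong version of Theorem~\ref{decay} is designed to bypass.
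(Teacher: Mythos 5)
Your overall strategy matches the paper's: bound $\mu^{*n}(H)$ by an escape probability for a random matrix product over a local field, using Theorem~\ref{decay}, and treat finite $H$ separately via non-amenability of $\Gamma$ and Kesten's theorem. The finite case you handle via Jordan's bound on finite subgroups of compact Lie groups; the paper instead applies Selberg's lemma to $\Gamma$ to bound $|\Gamma\cap H|$ for \emph{any} finite $H$, which is a slightly cleaner case split (``$\Gamma\cap H$ finite vs.\ infinite'' rather than ``$H$ finite vs.\ positive dimensional'') but not an essential difference.

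The genuine gap is in the central step, which you describe as ``choosing a highest-weight representation $V_v$ of $G_\C$ over $k_v$ on which $G_\C(k_v)$ acts proximally, we obtain a strongly irreducible and proximal representation of $\Gamma$ on $V_v$.'' Two things are missing here, and together they constitute the hard core of Section~3.1 of the paper. First, proximality of the ambient group $\bG(K)$ on a representation $V^\omega(K)$ does not give proximality of the discrete subgroup $\Gamma$ on it: what matters is whether the Jordan projection of some element of $\Gamma$ lands in the correct open face of the Weyl chamber, and for the non-archimedean places one obtains from Tits' lemma this is exactly what can fail. This is the ``reduction to the proximal case'' flagged as the difficulty at the start of the section, and it is not resolved by simply picking a fundamental weight whose highest weight space is one-dimensional. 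The paper isolates the bad case in hypothesis~(\ref{notomega}) and constructs a bespoke representation $\mathcal{S}_X$ adapted to the $v$-adic Jordan projection $X$ of a specific non-torsion element $\gamma\in\Gamma\cap H_0$, precisely so that $\gamma$ is proximal on $\mathcal{S}_X(K)$.

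Second, Theorem~\ref{decay} is applied to the event that $\Gamma\cap H$ stabilizes a proper subspace of the representation being used. Your proposal refers to ``the image of $[\bigwedge^k\h_H]$ in $\mathbb{P}(V_v)$,'' but for a general highest-weight representation $V_v$ this phrase has no meaning unless $V_v$ is a constituent of $\bigwedge^k\g$ and $\bigwedge^k\h_H$ actually projects nontrivially there. Establishing that $\Gamma\cap H$ leaves a proper subspace of $\mathcal{S}_X$ invariant is what Lemma~\ref{nonirreducible} does, and its proof rests on a nontrivial combinatorial fact about faces of root systems (Lemma~\ref{faces}: the intersection of the $W_{\tilde\alpha}$-orbit of the face $\mathcal{E}_X$ is $\{\tilde\alpha\}$). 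There is also the secondary issue that $\h_H$ is an $\R$-subspace with no reason to be defined over $K$; the paper handles this via Lemma~\ref{boundedextension} by passing to a bounded-degree extension. None of these steps is routine, and your proposal treats them as if they were. The remark in the paper that Breuillard's approach would \emph{bypass} the proximal reduction (by extending Theorem~\ref{decay} to non-proximal representations) confirms that the reduction is the substantive difficulty, and it cannot be dispatched by the two sentences you allot to it.
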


The proposition is based on the following lemma.

\begin{lemma}\label{goodrepresentation}
Let $\Gamma=\langle S\rangle$ be a finitely generated dense subgroup in $G$. There exists a finite collection of vector spaces $\mathcal{S}_i$, $1\leq i\leq s$, over local fields $K_i$, such that the following holds:
\begin{itemize}
\item for each $i\in\{1,\dots,s\}$, the group $\Gamma$ acts proximally and strongly irreducibly on $\mathcal{S}_i$;
\item for any proper closed subgroup $H<G$ such that $\Gamma\cap H$ is infinite, there exists an $i\in\{1,\dots,s\}$ for which $\Gamma\cap H$ stabilizes a proper linear subspace of $\mathcal{S}_i$.
\end{itemize}
\end{lemma}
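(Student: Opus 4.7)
The plan is to embed $\Gamma$ into $\bG(L)$ for a suitable finitely generated field $L$, exploit the existence of non-archimedean completions at which $\Gamma$ has unbounded image, and build the family $\mathcal{S}_i$ out of fundamental representations of $\bG$ at these completions, supplemented by finitely many auxiliary representations detecting non-parabolic maximal algebraic subgroups.

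First, via $\Ad$ the group $\Gamma$ sits inside $\bG(L)$, where $\bG$ denotes the simple algebraic group over $\C$ whose compact real form contains $G$, and $L\subset\C$ is the field generated by the matrix entries of $\Ad(s)$ for $s$ in the finite generating set $S$. Since $\Gamma$ is dense in the connected Lie group $G$, it is Zariski dense in $\bG$. If $H\subsetneq G$ is a closed subgroup with $\Gamma\cap H$ infinite, then the Zariski closure $\mathbf{H}$ of $\Gamma\cap H$ inside $\bG$ is a proper algebraic subgroup, so it is enough to exhibit a finite list of representations that detects every proper algebraic subgroup of $\bG$.

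Second, I would select finitely many completions $L_{v_1},\dots,L_{v_r}$ of $L$ at non-archimedean places at which the image of $\Gamma$ is unbounded. Such places exist because $\Gamma$ is infinite and lies in the compact group $G$: the matrix entries of the generators live in a finitely generated subring of $L$, whose group of units has finite rank, so a pigeonhole/product-formula argument forces at least one non-archimedean place to witness unboundedness, and varying over such places yields the finite list. At each $v_j$, the image of $\Gamma$ remains Zariski dense in $\bG(L_{v_j})$. I would then take, for each $v_j$, the irreducible fundamental $\bG(L_{v_j})$-representations whose highest weight is $L_{v_j}$-rational. Each such representation is $\Gamma$-irreducible by Zariski density, and strongly irreducible because $\bG$ is connected simple (any finite-index subgroup of $\Gamma$ is still Zariski dense); moreover its highest weight space is one-dimensional, so the unbounded Zariski dense image of $\Gamma$ contains a proximal element.

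Third, to secure the detection property, I would invoke Dynkin's classification: a simple algebraic group admits only finitely many conjugacy classes of maximal proper algebraic subgroups, of two kinds --- parabolic and reductive. Parabolic subgroups are precisely stabilizers of lines in fundamental representations and are therefore already detected by the list of the previous paragraph. For each of the finitely many conjugacy classes of maximal reductive subgroups $\mathbf{M}$, Chevalley's theorem provides a $\bG$-representation in which $\mathbf{M}$ stabilizes a line; by passing to an appropriate irreducible $\bG$-subquotient carrying that line and realising it over a place $v_j$ where $\Gamma$ is unbounded, one obtains a representation that is simultaneously strongly irreducible, proximal, and has an $\mathbf{M}$-invariant proper subspace.

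The main obstacle is the coupling in this last step: for each non-parabolic maximal $\mathbf{M}$ one must produce a $\bG$-representation that is proximal and strongly irreducible for $\Gamma$ while still carrying an $\mathbf{M}$-invariant proper subspace. Strong irreducibility comes cheaply from Zariski density, but reconciling proximality (which forces a choice of non-archimedean place exhibiting the unboundedness of some $\gamma\in\Gamma$ on this particular representation, with a one-dimensional dominant weight space) with the preservation of an $\mathbf{M}$-fixed line --- a branching-type constraint --- is the delicate technical heart of the argument.
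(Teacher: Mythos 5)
You have correctly reduced the problem to detecting the Zariski closure of $\Gamma\cap H$ for each of the finitely many conjugacy classes of maximal proper subgroups, and the idea of picking non-archimedean places where $\Gamma$ is unbounded to manufacture proximality is on the right track. But the step you yourself flag as ``the delicate technical heart'' is a genuine gap, not a detail: for a non-parabolic maximal $\mathbf{M}$, producing a single $\bG$-representation that is simultaneously $\Gamma$-proximal, $\Gamma$-strongly-irreducible, and admits an $\mathbf{M}$-stable proper subspace is exactly what the whole lemma is about, and Chevalley's theorem plus ``pass to an irreducible subquotient over a good place'' does not supply it. In particular, proximality is not a free consequence of unboundedness: one needs a specific element $\gamma\in\Gamma$ with a \emph{unique} eigenvalue of maximal modulus on the \emph{given} representation, and there is no guarantee that the irreducible subquotient you extract from a Chevalley representation for $\mathbf{M}$ has this property at a place where $\Gamma$ is unbounded. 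Also, in the compact setting, every proper closed $H<G$ is already (essentially) reductive, so the parabolic branch of your dichotomy is largely vacuous and the difficulty is concentrated entirely in the case you left open.

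The paper resolves this coupling problem in a different, more hands-on way. Rather than classifying maximal subgroups and invoking Chevalley, it picks a non-torsion element $\gamma\in\Gamma\cap H_0$, embeds the field generated by the entries into a local field $K_v$ so that some eigenvalue $\lambda$ of $\Ad\gamma$ satisfies $|\lambda|_v>1$ (Tits' lemma), and packages the valuations of the eigenvalues of $\Ad\gamma$ into a vector $X$ in the Weyl chamber. It then takes the ``face'' $\mathcal{E}_X$ of roots maximizing $\langle\alpha,X\rangle$ and builds the representation $\mathcal{S}_X\subset\bigwedge^{m_X}\mathfrak{g}_\C$ with highest weight $\omega_X=\sum_{\alpha\in\mathcal{E}_X}\alpha$. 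By construction, $\xi_X=\bigwedge_{\alpha\in\mathcal{E}_X}E_\alpha$ is the eigenvector of $\gamma$ for the \emph{unique} eigenvalue of maximal modulus in $K_v$, so $\Gamma$ acts proximally on $\mathcal{S}_X(K)$ for a suitable finite extension $K$ of $K_v$; strong irreducibility is Zariski density; and the root-combinatorial Lemma~\ref{faces} together with Lemma~\ref{nonirreducible} shows that $\mathcal{S}_X$ is not $H_0$-irreducible. The auxiliary assumption~(\ref{notomega}), Lemma~\ref{tilde}, and Lemma~\ref{opposition} handle the degenerate cases where $X$ is forced onto a wall of the Weyl chamber (notably in type $A_\ell$). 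So the paper sidesteps the ``branching-type constraint'' by letting the element $\gamma$ itself dictate the representation, which is the key idea missing from your proposal.
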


Let us explain how this lemma implies Proposition~\ref{subgroups}, when combined with the following important result of random matrix products theory \cite[Proposition~12.3]{benoistquintwalk} (see also \cite[Theorem~4.4]{bflm}).

\begin{theorem}\label{decay}
Let $K$ be a local field and $\mathcal{S}$ be a finite dimensional vector space over $K$. Suppose $\mu$ is a measure on $GL(\mathcal{S})$ such that the semigroup $\Gamma$ generated by the support of $\mu$ acts proximally on $\mathcal{S}$.
Then, there exists a constant $\kappa=\kappa(\mu)$ such that for any integer $n$ large enough, for any vector $v\in\mathcal{S}$ and any hyperplane $V<\mathcal{S}$,
$$\mu^{*n}(\{g\in GL(\mathcal{S}) \,|\, g\cdot v \in V\}) \leq e^{-\kappa n}.$$
\end{theorem}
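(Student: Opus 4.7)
The plan is to prove Theorem~\ref{decay} using the Furstenberg--Guivarc'h--Le Page theory of products of random matrices, formulated over a general local field $K$ as in \cite{benoistquintwalk}. The starting point is to projectivize: writing $\bar{v}$ for the class of $v$ in $\mathbb{P}(\mathcal{S})$, the event $\{g\cdot v\in V\}$ becomes $\{g\cdot\bar{v}\in\mathbb{P}(V)\}$, so it suffices to analyze the random walk induced by $\mu$ on $\mathbb{P}(\mathcal{S})$ and bound the probability that it lands in the projective hyperplane $\mathbb{P}(V)$.

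The argument rests on three ingredients, all consequences of the proximality assumption (after restricting if necessary to a minimal $\Gamma$-invariant closed subset $X\subset\mathbb{P}(\mathcal{S})$). \emph{(i)} There is a unique $\mu$-stationary probability measure $\nu$ supported on $X$, playing the role of Furstenberg's boundary measure. \emph{(ii)} The measure $\nu$ is Hölder regular near every projective hyperplane, in the sense that for some $\alpha=\alpha(\mu)>0$ and $C>0$,
\begin{equation*}
\nu\bigl(\mathbb{P}(V)^{(\rho)}\bigr)\leq C\rho^{\alpha} \quad\text{for every hyperplane $V<\mathcal{S}$ and every $\rho>0$.}
\end{equation*}
\emph{(iii)} The distribution of $g\bar{v}$ under $\mu^{*n}$ converges exponentially fast to $\nu$ in the dual of Hölder functions: for any Hölder function $\varphi$ on $\mathbb{P}(\mathcal{S})$,
\begin{equation*}
\Bigl|\int\varphi(g\bar{v})\,d\mu^{*n}(g)-\int\varphi\,d\nu\Bigr|\leq C_{\varphi}\,e^{-\kappa_{1}n}.
\end{equation*}
Points \emph{(ii)} and \emph{(iii)} are established in parallel via Le Page's spectral gap for the Markov transfer operator acting on a suitable Banach space of Hölder functions, using the large deviation estimates for the norm cocycle $g\mapsto\log\|gv\|$ and the contraction provided by the positive gap between the top two Lyapunov exponents, which proximality guarantees.

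Given these three ingredients the conclusion is quick. Choose $\rho_{n}=e^{-\kappa_{1}n/2}$ and a Hölder cutoff $\varphi_{n}$ with $\mathbbm{1}_{\mathbb{P}(V)}\leq\varphi_{n}\leq\mathbbm{1}_{\mathbb{P}(V)^{(\rho_{n})}}$ and Hölder constant $\ll\rho_{n}^{-1}$. Then
\begin{equation*}
\mu^{*n}\bigl(\{g\cdot v\in V\}\bigr)\leq\int\varphi_{n}(g\bar{v})\,d\mu^{*n}(g)\leq\nu\bigl(\mathbb{P}(V)^{(\rho_{n})}\bigr)+C\rho_{n}^{-1}e^{-\kappa_{1}n},
\end{equation*}
and both terms on the right are bounded by $e^{-\kappa n}$ for a suitable $\kappa>0$ and $n$ large, proving the theorem. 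A starting class $\bar{v}$ lying outside the support of $\nu$ is handled by a preliminary convolution step: a fixed number $n_{0}$ of steps of the walk already pushes all but exponentially little mass into a controlled neighborhood of $X$, after which the argument proceeds as above.

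The delicate ingredient is \emph{(ii)}, the Hölder regularity of the stationary measure on neighborhoods of hyperplanes. Over $\R$ or $\C$ under strong irreducibility this estimate is classical, going back to Guivarc'h and Le Page, but under proximality alone and over an arbitrary local field it requires a careful choice of norm on $\mathcal{S}$, an analysis of the contraction on projective space that is valid both in the archimedean and ultrametric cases, and a verification of quasi-compactness of the transfer operator on Hölder test functions. I would import this precise estimate, together with the exponential equidistribution of \emph{(iii)}, from the machinery developed in \cite{benoistquintwalk} rather than redo it here; this is indeed exactly the content of Proposition~12.3 cited in the statement.
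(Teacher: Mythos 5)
The paper does not prove this statement at all: Theorem~\ref{decay} is quoted verbatim as an external input, namely \cite[Proposition~12.3]{benoistquintwalk} (with \cite[Theorem~4.4]{bflm} as an alternative source), and your proposal, after sketching the machinery, ends by importing exactly the same proposition, so in substance you follow the same route as the authors. Your sketch of what lies behind that citation --- projectivize, unique stationary measure, H\"older regularity of $\nu$ near projective hyperplanes, exponential equidistribution of $g\bar v$ via a spectral gap for the transfer operator on H\"older functions, then a H\"older cutoff at scale $\rho_n=e^{-\kappa_1 n/2}$ --- is indeed the standard Furstenberg--Guivarc'h--Le~Page/BFLM argument, and the final estimate you write down is the right way to combine (ii) and (iii); also, your ``preliminary convolution step'' for starting points off the support of $\nu$ is unnecessary, since under the actual hypotheses the equidistribution estimate is uniform over all $\bar v\in\mathbb{P}(\mathcal{S})$.

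One caveat deserves emphasis: you assert that ingredients (ii) and (iii) can be established ``under proximality alone,'' and that is false. Proximality without (strong) irreducibility does not rule out a proper $\Gamma$-invariant subspace: for a semigroup generated by a single diagonal matrix with distinct moduli of eigenvalues, the action is proximal, yet one can take $v$ an eigenvector and $V$ an invariant hyperplane containing it, so that $\mu^{*n}(\{g\,|\,gv\in V\})=1$ for all $n$; in that situation $\nu$ can even be a Dirac mass sitting on $\mathbb{P}(V)$, so no H\"older regularity near hyperplanes can hold. The statement as reproduced in the paper is loose on this point, but the cited \cite[Proposition~12.3]{benoistquintwalk} assumes the representation is both proximal and strongly irreducible, and this is how the theorem is actually used here (the spaces $\mathcal{S}_i$ of Lemma~\ref{goodrepresentation} are strongly irreducible). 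If you were to carry out your sketch rather than cite it, you would need strong irreducibility explicitly in (ii) and (iii); with that hypothesis restored, your outline is correct.
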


\begin{proof}[Proof of Proposition~\ref{subgroups}]
Let $\Gamma$ be the group generated by the support of $\mu$.
Given a proper closed connected subgroup $H$ of $G$, we distinguish two cases.\\
\underline{First case:} $\Gamma\cap H$ is finite.\\
By Selberg's Lemma, $\Gamma$ contains a torsion free subgroup of finite index $N_0$. Hence the cardinality of $\Gamma\cap H$ is bounded by $N_0$ and the uniform exponential decay of $\mu^{*n}(H)=\mu^{*n}(\Gamma\cap H)$ is a direct consequence of Kesten's Theorem \cite[Corollary~3]{kesten} since $\Gamma$ is not amenable.\\
\underline{Second case:} $\Gamma\cap H$ is infinite.\\
Let $\mathcal{S}_i$, $1\leq i\leq s$, be the vector spaces given by Lemma~\ref{goodrepresentation}. For each $i$, the measure $\mu$ may be viewed as a measure on $GL(\mathcal{S}_i)$. Choose $\kappa>0$ such that the conclusion of Theorem~\ref{decay} holds for each $\mathcal{S}_i$.\\
Choose $i$ such that $\Gamma\cap H$ stabilizes a proper subspace $L$ of $\mathcal{S}_i$. We then have, for $n$ large enough,
$$\mu^{*n}(\{g\in\Gamma \,|\, g\cdot L=L\}) \leq e^{-\kappa n},$$
so that
$$\mu^{*n}(H) = \mu^{*n}(H\cap\Gamma) \leq e^{-\kappa n}.$$
\end{proof}

Before turning to the proof of Lemma~\ref{goodrepresentation}, let us recall the setting.
The group $\Gamma$ is a dense finitely generated free subgroup of the connected compact simple group $G$, and $\sk$ is the field generated by the coefficients of the elements $\Ad g$, for $g$ in $\Gamma$.
As $\Gamma$ is dense in $G$, we may view $G$ as the group of real points of an algebraic group $\bG$ defined over $\sk$. Whenever $K$ is a field containing $\sk$, we will denote by $\bG(K)$ the group of $K$-points of $\bG$. Similarly, if $V$ is a linear representation of $\bG$ defined over $K$, we will write $V(K)$ for the associated $K$-vector space, on which $\bG(K)$ acts.\\
In the case when $\Gamma$ acts proximally on the adjoint representation $\mathfrak{g}(K)$, for some local field $K$ containing $\sk$, the proof of Lemma~\ref{goodrepresentation} is substantially simpler. This is the content of the next lemma.

\begin{lemma}\label{adjoint}
Assume that $\Gamma$ acts proximally on $\mathfrak{g}(K)$, for some local field $K$ containing $\sk$. Then,
\begin{itemize}
\item the group $\Gamma$ acts proximally and strongly irreducibly on $\mathfrak{g}(K)$;
\item for any proper closed subgroup $H<G$ such that $\Gamma\cap H$ is infinite, $\Gamma\cap H$ stabilizes a proper linear subspace of $\mathfrak{g}(K)$.
\end{itemize}
\end{lemma}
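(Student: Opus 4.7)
The plan is to derive both assertions from the Zariski density of $\Gamma$ in $\bG$ combined with the algebraic simplicity of $\bG$.

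First, I would establish that $\Gamma$ is Zariski dense in $\bG$ as a $\sk$-algebraic group. Since $G$ is compact and $\Gamma$ is dense in $G$, $\Gamma$ is automatically Zariski dense in the real algebraic group $\bG_\R := \bG \otimes_\sk \R$; as $\bG$ is geometrically connected (in fact absolutely almost simple), this propagates to Zariski density of $\Gamma$ in $\bG$ over $\sk$ and, by further base change, in $\bG_K := \bG \otimes_\sk K$.

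For the first bullet, the adjoint representation of $\bG_K$ on $\g(K)$ is irreducible because $\bG$ is absolutely almost simple. Any finite-index subgroup of $\Gamma$ is still Zariski dense in the connected group $\bG_K$, and so also acts irreducibly on $\g(K)$. This rules out any $\Gamma$-invariant finite union of proper subspaces, which is precisely strong irreducibility. Combined with the proximality hypothesis, this proves the first assertion.

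For the second bullet, let $H < G$ be a proper closed subgroup with $\Gamma \cap H$ infinite, and let $\mathbf{L}$ be the Zariski closure of $\Gamma \cap H$ in $\bG$ over $\sk$. Since $\Gamma \cap H$ is infinite and $\bG$ is defined in characteristic zero, $\mathbf{L}$ is positive-dimensional, and its Lie algebra $\mathfrak{l} \subset \g$ is a nonzero $\sk$-subspace stabilized by $\Gamma \cap H$ under $\Ad$; by extension of scalars, $\mathfrak{l}(K) \subset \g(K)$ inherits the same invariance. It remains to check that $\mathbf{L}$ is a proper subgroup of $\bG$, equivalently that $\mathfrak{l}(K)$ is a proper subspace of $\g(K)$. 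For this I would invoke the standard fact that every closed subgroup of a compact Lie group is real algebraic: thus $H$ coincides with a real algebraic subgroup of $\bG_\R$, and the real Zariski closure $\mathbf{L}(\R)$ of $\Gamma \cap H$ lies in $H$, hence is strictly smaller than $G$. By extension of scalars, $\mathbf{L}$ is then a proper $\sk$-subgroup of $\bG$, and $\mathfrak{l}(K)$ is the sought-for proper invariant subspace.

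The main obstacle is precisely this containment $\mathbf{L}(\R) \subset H$, which rests on the algebraicity of closed subgroups of compact Lie groups; the remaining steps are routine manipulations with Zariski closures and extension of scalars from $\sk$ to $K$.
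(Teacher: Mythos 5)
Your proposal is correct and follows essentially the same route as the paper: strong irreducibility is deduced from Zariski density of $\Gamma$ (and of its finite-index subgroups) in the absolutely almost simple $\bG$, and the invariant subspace for $\Gamma\cap H$ is the Lie algebra of its Zariski closure, a nonzero proper $\sk$-subspace. The paper merely asserts that this Lie algebra is proper without elaborating; you fill in that gap by invoking the algebraicity of closed subgroups of compact Lie groups, which is a legitimate way to justify the claim.
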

\begin{proof}
By assumption, $\Gamma$ acts proximally on $\mathfrak{g}(K)$.
As $\Gamma$ is dense in $G$, it is Zariski dense in $\bG(K)$, and therefore $\Gamma$ acts strongly irreducibly on $\mathfrak{g}(K)$.\\
Now if $H$ is a proper closed infinite subgroup of $G$ such that $\Gamma\cap H$ is infinite, then $\Gamma\cap H$ stabilizes the (complex) Lie algebra of the Zariski closure of $\Gamma\cap H$. This is a proper subspace $L<\mathfrak{g}_{\C}$ defined over $\sk$ (and hence, over $K$), so that $\Gamma\cap H$ stabilizes a proper subspace of $\mathfrak{g}(K)$.
\end{proof}

Let $\Delta\subset E$ ($E$ a Euclidean space of dimension $\rk G$) be the root system of $G$, choose a basis $\Pi$ for $\Delta$, and let $C$ be the associated Weyl chamber. If $\omega$ is a dominant weight, with associated irreducible representation $V^\omega$, we denote by $\omega^*$ the dominant weight of the dual irreducible representation $(V^\omega)^*$. We observe the following:

\begin{lemma}\label{tilde}
Let $\tilde{\alpha}$ be the largest root of $\Delta$. Either $\tilde{\alpha}=\omega$ is a fundamental weight, or $\tilde{\alpha}=\omega+\omega^*$ is the sum of a fundamental weight and its dual (those two might coincide).
\end{lemma}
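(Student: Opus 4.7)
The plan is to combine two ingredients: self-duality of the adjoint representation, and the classification of highest roots in simple root systems.

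First, write $\tilde{\alpha} = \sum_i n_i \omega_i$ in the basis of fundamental weights, where $n_i = \langle \tilde{\alpha}, \alpha_i^\vee \rangle \in \Z_{\geq 0}$ since $\tilde{\alpha}$, being the highest root, is dominant. The adjoint representation $\g$ is irreducible of highest weight $\tilde{\alpha}$ and carries the nondegenerate invariant symmetric Killing form, so it is self-dual. Hence $\tilde{\alpha}^* = \tilde{\alpha}$: equivalently, the indices $i$ with $n_i > 0$ are permuted by the Dynkin-diagram involution $i \mapsto i^*$ induced by $-w_0$, with $n_i = n_{i^*}$.

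The core step is then to establish the bound $\sum_i n_i \leq 2$. This is equivalent to saying that the affine node $-\tilde{\alpha}$ in the extended Dynkin diagram of $G$ has (weighted) degree at most $2$, and it can be read off directly from the classification of simple Lie algebras (see e.g.\ the tables in Bourbaki): in types $B_n$ ($n \geq 3$), $D_n$, $E_6$, $E_7$, $E_8$, $F_4$, $G_2$ one has $\tilde{\alpha} = \omega_i$ for a single fundamental weight, so $\sum_i n_i = 1$; in types $A_n$, $B_2$, $C_n$ one has $\tilde{\alpha} = \omega_1 + \omega_n$ or $\tilde{\alpha} = 2\omega_i$ for a suitable $i$, so $\sum_i n_i = 2$.

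The conclusion is then a dichotomy. If $\sum_i n_i = 1$, then $\tilde{\alpha} = \omega_i$ is a fundamental weight, automatically self-dual by the first step, which is the first case of the lemma. If $\sum_i n_i = 2$, then either $\tilde{\alpha} = 2\omega_i$, in which case self-duality of $\tilde{\alpha}$ forces $\omega_i^* = \omega_i$ and $\tilde{\alpha} = \omega_i + \omega_i^*$ with $\omega_i = \omega_i^*$; or $\tilde{\alpha} = \omega_i + \omega_j$ with $i \neq j$, in which case invariance of the support $\{i, j\}$ under $*$ forces $j = i^*$, and again $\tilde{\alpha} = \omega_i + \omega_i^*$. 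The main obstacle is the uniform bound $\sum_i n_i \leq 2$: there is no truly type-free argument, so one has to rely on inspection of the (affine) Dynkin diagrams.
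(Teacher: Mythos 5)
Your argument is correct in substance but takes a genuinely different route from the paper, which gives a uniform, classification-free proof. The paper picks a fundamental weight $\omega$ minimizing $\langle\omega,\rho\rangle$ (with $\rho$ the sum of the fundamental weights), observes that the adjoint representation embeds in $\End V^\omega\simeq V^\omega\otimes(V^\omega)^*$, hence $\tilde{\alpha}=\omega+\omega^*-\sum_i n_i\alpha_i$ with $n_i\in\mathbb{N}$, and pairs with $\rho$: equality $\langle\tilde{\alpha},\rho\rangle=2\langle\omega,\rho\rangle$ forces all $n_i=0$, i.e.\ $\tilde{\alpha}=\omega+\omega^*$, while strict inequality combined with the minimality of $\langle\omega,\rho\rangle$ and the dominance of $\tilde{\alpha}$ forces $\tilde{\alpha}$ to be a single fundamental weight. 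So your remark that ``there is no truly type-free argument'' is exactly what the paper refutes; its proof buys uniformity and brevity, whereas yours buys explicit knowledge of which types fall into which case (useful context, e.g.\ the paper later notes that the case $\omega\neq\omega^*$ occurs only in type $A_\ell$). One small logical slip in your write-up: in the case $\tilde{\alpha}=\omega_i+\omega_j$ with $i\neq j$, invariance of the support $\{i,j\}$ under the duality involution does \emph{not} by itself force $j=i^*$ --- it is also compatible with $i^*=i$ and $j^*=j$, which would give a sum of two distinct self-dual fundamental weights. That configuration happens not to occur, but ruling it out again requires the tables (or the paper's argument), so you should either appeal to the classification once more at that point or restructure so that the entire dichotomy is read off from the tables in one pass.
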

\begin{proof}
Let $\rho$ be the sum of all fundamental weights of $\Delta$. Choose a fundamental weight $\omega$ minimizing $\langle\omega,\rho\rangle$. The adjoint representation can be viewed as a subrepresentation of $\End V^\omega\simeq V^\omega\otimes (V^\omega)^*$. Comparing the highest weights, we find that $\tilde{\alpha}$ can be written
$$\tilde{\alpha} = \omega+\omega^* -\sum_i n_i\alpha_i, \quad n_i\in\mathbb{N},\ \alpha_i\ \mbox{simple roots}.$$
Taking the inner product with $\rho$, we find that $\langle\tilde{\alpha},\rho\rangle \leq 2\langle\omega,\rho\rangle$ and in case of equality, we must have all $n_i$ equal to zero i.e. $\tilde{\alpha}=\omega+\omega^*$. On the other hand, if the inequality is strict, by minimality of $\langle\omega,\rho\rangle$, the dominant weight $\tilde{\alpha}$ must be fundamental (not necessarily $\omega$, though). This proves the lemma.
\end{proof}

Finally, we recall the following fact.

\begin{lemma}\label{opposition}
Assume $\Gamma$ acts proximally on $V^{\omega}(K)$, for some local field $K$ containing $\sk$. Then, $\Gamma$ acts proximally on $V^{\omega+\omega^*}(K)$. 
\end{lemma}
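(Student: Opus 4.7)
The plan is to reformulate the proximality of $\Gamma$ as an intrinsic property of the representation and to prove the corresponding representation-theoretic implication. Since $\Gamma$ is dense in $G$, it is Zariski-dense in $\bG(K)$, and for a Zariski-dense subgroup of $\bG(K)$, the action on an irreducible $K$-representation $V$ is proximal if and only if $V$ is \emph{$K$-proximal}, i.e.\ its highest weight space with respect to a maximal $K$-split torus of $\bG$ is one-dimensional (see \cite{benoistquintwalk}). The task thus becomes: if $V^{\omega}$ is $K$-proximal, so is $V^{\omega+\omega^*}$.

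First I would check that $V^{\omega^*}\cong (V^{\omega})^*$ is $K$-proximal. Its highest $K$-weight space is canonically dual to the lowest $K$-weight space of $V^{\omega}$, and the longest element of the $K$-restricted Weyl group exchanges highest and lowest $K$-weight spaces of $V^{\omega}$ while preserving dimensions; so the lowest is one-dimensional, hence so is the highest weight space of the dual. Next I would examine the tensor product $V^{\omega}\otimes V^{\omega^*}$, whose $K$-weight spaces decompose as direct sums $\bigoplus_{\mu+\nu=\lambda} V^{\omega}_{\mu}\otimes V^{\omega^*}_{\nu}$. The highest $K$-weight $\omega_K+\omega_K^*$ therefore appears with multiplicity $1\cdot 1=1$, spanned by $v_1\otimes v_2$ for any choice of highest $K$-weight vectors $v_1, v_2$. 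This vector is precisely the highest $K$-weight vector of the Cartan component $V^{\omega+\omega^*}\subset V^{\omega}\otimes V^{\omega^*}$, so the highest $K$-weight space of $V^{\omega+\omega^*}$ is one-dimensional: $V^{\omega+\omega^*}$ is $K$-proximal, and a final application of the Zariski-density criterion yields the desired proximal action of $\Gamma$ on $V^{\omega+\omega^*}(K)$.

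The main subtlety I expect to confront is the bookkeeping with \emph{$K$-restricted} rather than absolute weights in the non-split case: one must consistently work with a maximal $K$-split torus of $\bG$ and its restricted root and Weyl groups throughout, and verify that the tensor-product and duality operations respect these $K$-rational structures. With that proviso, everything collapses to formal manipulations with irreducible representations; in particular, if $\bG$ happens to be $K$-split, every irreducible $K$-representation is automatically $K$-proximal and the statement is trivial, so the content of the lemma really lies in the non-split situation.
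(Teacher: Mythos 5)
Your proposal is correct, but it takes a genuinely different route from the paper. The paper's proof is a one-liner: it invokes \cite[Lemme~3.9]{benoistluminy}, which supplies an element $\gamma\in\Gamma$ such that both $\gamma$ and $\gamma^{-1}$ act proximally on $V^{\omega}$; such a $\gamma$ then has a simple top eigenvalue on $V^{\omega}\otimes(V^{\omega})^{*}$ realized in the Cartan component $V^{\omega+\omega^*}$, so $\gamma$ is proximal there, and since $\Gamma$ acts strongly irreducibly (by Zariski density), the existence of one proximal element gives proximality of $\Gamma$. Your proof instead reformulates proximality intrinsically via the $K$-proximality criterion (one-dimensional highest restricted-weight space) and propagates it through duality and the tensor product. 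Both roads lead to the same place; the paper's is more economical because the ``bi-proximal element'' fact packages together exactly the two pieces you prove separately (that the top and bottom restricted weight spaces are one-dimensional).

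One point you should make explicit: the equivalence ``$\Gamma$ proximal on $V$ $\Longleftrightarrow$ $V$ is $K$-proximal'' for a Zariski-dense $\Gamma$ also requires that the image of $\Gamma$ in $PGL(V)$ be unbounded. Here this comes for free: the hypothesis gives unboundedness of $\Gamma$ in $PGL(V^{\omega})$, which (the map $\bG(K)\to PGL(V)$ having finite kernel for any nontrivial irreducible $V$ of the simple group $\bG$) forces unboundedness in $PGL(V^{\omega+\omega^*})$. Relatedly, your closing remark that ``if $\bG$ is $K$-split the statement is trivial'' is a bit misleading: in the split case every irreducible representation is $K$-proximal, yes, but the lemma still has content, namely that the unboundedness of $\Gamma$ transfers from $V^{\omega}$ to $V^{\omega+\omega^*}$ --- and in the paper's actual application (proof of Lemma~\ref{goodrepresentation}) the field $K$ is indeed chosen so that $\bG$ splits over it.
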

\begin{proof}
This is an immediate consequence of the fact that if $\Gamma$ acts proximally on a vector space $V$, then we may find an element $\gamma$ in $\Gamma$ such that both $\gamma$ and $\gamma^{-1}$ act proximally on $V$, see \cite[Lemme~3.9]{benoistluminy}.
\end{proof}

According to Lemma~\ref{tilde}, write $\tilde{\alpha}=\omega$ or $\tilde{\alpha}=\omega+\omega^*$. Putting together Lemma~\ref{adjoint} and Lemma~\ref{opposition}, we find that Lemma~\ref{goodrepresentation} holds whenever $\Gamma$ acts proximally on $V^\omega(K)$ (or $V^{\omega^*}(K)$) for some local field $K$. Therefore, for the rest of the proof of Lemma~\ref{goodrepresentation}, we assume (writing the largest root $\tilde{\alpha}=\omega+\omega^*$ or $\tilde{\alpha}=\omega$, for some fundamental weight $\omega$):

\begin{equation}\label{notomega}
\mbox{There is no local field $K$ such that $\Gamma$ acts proximally on $V^\omega(K)$.}
\end{equation}

To prove Lemma~\ref{goodrepresentation}, we start by defining a certain family of irreducible complex representations of $G$.
For any nonzero vector $X$ in the Weyl chamber $C$ of $\Delta$, we let 
$$\mathcal{E}_X = \{\alpha\in\Delta \,|\, \langle\alpha, X\rangle\ \mbox{is maximal}\}$$
and
$$m_X=\card \mathcal{E}_X.$$
Note that the largest root $\tilde{\alpha}$ of $\Delta$ always belongs to $\mathcal{E}_X$ so that $\mathcal{E}_X = \{\alpha\in\Delta \,|\, \langle\tilde{\alpha}-\alpha, X\rangle=0 \}$.\\
Finally, we define a dominant weight $\omega_X$ by
$$\omega_X = \sum_{\alpha\in\mathcal{E}_X} \alpha,$$
and denote by $\mathcal{S}_X$ the irreducible representation of $G$ with highest weight $\omega_X$.\\
A simple way to check that $\omega_X$ is indeed a dominant weight is to construct $\mathcal{S}_X$ explicitly as follows.
Write the decomposition of $\mathfrak{g}_{\mathbb{C}}$ into root spaces for some maximal torus $T$:
$$\mathfrak{g}_{\mathbb{C}}=\mathfrak{t}_{\C}\oplus\left(\bigoplus_{\alpha\in\Delta}\mathfrak{g}_\alpha\right).$$
Each $\mathfrak{g}_\alpha$ is one-dimensional, so write $\mathfrak{g}_\alpha=\mathbb{C}E_\alpha$. The representation $\mathcal{S}_X$ is the subrepresentation of $\bigwedge^{m_X}\mathfrak{g}_{\mathbb{C}}$ generated by the vector 
$$\xi_X=\bigwedge_{\alpha\in\mathcal{E}_X}E_\alpha \in \bigwedge^{m_X}\mathfrak{g}_{\mathbb{C}}.$$

The spaces $\mathcal{S}_i$ of Lemma~\ref{goodrepresentation} will be constructed as representations $\mathcal{S}_X(K)$, where the local field $K$ will be suitably chosen as to arrange that the action of $\Gamma$ is proximal. The difficult point will be to prove the existence of a proper stable subspace under $\Gamma\cap H$, when $H$ is a closed subgroup. For that, one crucial observation is the following fact about faces of root systems.

\begin{lemma}\label{faces}
Let $\Delta$ be an irreducible root system with a given basis $\Pi$. Denote by $\tilde{\alpha}$ the largest root of $\Delta$, and let $X$ be a nonzero vector in the Weyl chamber $C$. In the case $\tilde{\alpha}=\omega+\omega^*$ and $\omega\neq\omega^*$, assume $X$ not collinear to $\omega$ nor to $\omega^*$. We define the \emph{face of $\Delta$ associated to $X$} by
$$\mathcal{E}_X=\{\alpha\in\Delta \,|\, \langle\tilde{\alpha}-\alpha,X\rangle = 0\},$$
and denote by $W_{\tilde{\alpha}}$ the stabilizer of $\tilde{\alpha}$ in the Weyl group $W$ of $\Delta$.
Then,
$$\bigcap_{w\in W_{\tilde{\alpha}}} w\cdot \mathcal{E}_X = \{\tilde{\alpha}\}.$$
\end{lemma}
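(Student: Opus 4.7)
The plan is to recast the problem in terms of a cone intersection. Given $\alpha$ in the intersection, set $\eta = \tilde\alpha - \alpha$. Since $w\tilde\alpha = \tilde\alpha$ for $w\in W_{\tilde\alpha}$, the condition $\alpha \in w\cdot\mathcal{E}_X$ rewrites as $\langle w^{-1}\eta, X\rangle = 0$; combined with the fact that $\tilde\alpha$ is the highest root (so that $\tilde\alpha-\alpha$ is a non-negative integer combination of simple roots) and $X$ lies in the closed Weyl chamber, this forces $w^{-1}\eta \in \mathcal{C}_I := \sum_{i\in I}\mathbb{R}_{\geq 0}\alpha_i$, where $I=\{i\in\Pi: \langle\alpha_i, X\rangle = 0\}$ is a proper subset of $\Pi$ (since $X\neq 0$). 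Hence the assumption on $\alpha$ is equivalent to the orbit inclusion $W_{\tilde\alpha}\cdot\eta \subseteq \mathcal{C}_I$, and the task is to show this forces $\eta=0$.

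Averaging over the orbit, $\tau:=\sum_{w\in W_{\tilde\alpha}}w\eta$ belongs to $\mathcal{C}_I\cap E^{W_{\tilde\alpha}}$. The cone $\mathcal{C}_I$ is simplicial (the $\alpha_i$ for $i\in I$ are linearly independent), hence pointed, and therefore admits a linear functional $\ell$ strictly positive on $\mathcal{C}_I\setminus\{0\}$. The whole lemma thus reduces to the key claim
\[
E^{W_{\tilde\alpha}}\cap\mathcal{C}_I = \{0\}:
\]
once this is granted, $\tau=0$, and since each $\ell(w\eta)\geq 0$ while $\sum_w\ell(w\eta) = 0$, every $w\eta$ vanishes, and in particular $\eta = 0$ and $\alpha = \tilde\alpha$.

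To verify the claim I split according to Lemma~\ref{tilde}. When $E^{W_{\tilde\alpha}}$ is the line $\mathbb{R}\tilde\alpha$ (the cases $\tilde\alpha$ a fundamental weight or $\tilde\alpha = 2\omega$), the fact that $\tilde\alpha$ has strictly positive coefficient on every simple root of our irreducible root system, while $I\subsetneq \Pi$, immediately gives $\mathbb{R}\tilde\alpha\cap\mathcal{C}_I=\{0\}$. When $\tilde\alpha = \omega+\omega^*$ with $\omega\neq\omega^*$, one is forced into type $A_n$ ($n\geq 2$) with $\omega = \omega_1$, $\omega^* = \omega_n$ and $E^{W_{\tilde\alpha}} = \mathbb{R}\omega_1 + \mathbb{R}\omega_n$ two-dimensional; writing $v = a\omega_1 + b\omega_n \in \mathcal{C}_I$, expanding in the simple-root basis, and imposing that the coefficients vanish on $\Pi\setminus I$ and are non-negative on $I$, one checks that only $a=b=0$ works unless $\Pi\setminus I = \{1\}$ or $\{n\}$---i.e.\ unless $X$ is collinear to $\omega$ or $\omega^*$, a situation that the hypothesis precisely excludes. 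This last two-dimensional analysis is the main obstacle, and the non-collinearity hypothesis is sharp: if for instance $X$ were collinear to $\omega_1$ in $A_n$, the set $\mathcal{E}_X$ would itself be $W_{\tilde\alpha}$-invariant, and the intersection would strictly contain $\tilde\alpha$.
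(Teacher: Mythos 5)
Your proof is correct, and it is a genuine reformulation of the paper's argument rather than a verbatim reproduction. Both proofs hinge on an averaging over $W_{\tilde{\alpha}}$ and on the same case split (via Lemma~\ref{tilde}) according to whether $E^{W_{\tilde{\alpha}}}=\mathbb{R}\tilde{\alpha}$ or $E^{W_{\tilde{\alpha}}}=\mathbb{R}\omega\oplus\mathbb{R}\omega^*$ in type $A$. The difference is in what gets averaged and why the average must vanish. The paper averages $X$ over $W_{\tilde{\alpha}}$ to produce a positive multiple of $\tilde{\alpha}$ (resp.\ $a\omega+b\omega^*$ with $a,b>0$), then kills $u\in\mathcal{E}_X'=\tilde{\alpha}-\mathcal{E}_X$ by the positivity $\langle u,\tilde{\alpha}\rangle>0$, which comes from $\tilde{\alpha}$ having maximal norm among roots. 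You instead note that each $w^{-1}\eta=\tilde{\alpha}-w^{-1}\alpha$ lies in the \emph{simplicial} cone $\mathcal{C}_I$ spanned by the simple roots orthogonal to $X$ (a strictly stronger constraint than $w^{-1}\eta\in X^\perp$), average $\eta$ over $W_{\tilde{\alpha}}$ to get $\tau\in E^{W_{\tilde{\alpha}}}\cap\mathcal{C}_I$, and finish by pointedness of $\mathcal{C}_I$ together with the linear-algebra fact $E^{W_{\tilde{\alpha}}}\cap\mathcal{C}_I=\{0\}$. In case one this last fact follows from $\tilde{\alpha}$ having strictly positive coefficients on \emph{all} simple roots (irreducibility), whereas the paper there invokes the maximal-norm property of $\tilde{\alpha}$; in type $A_n$ your verification is a clean one-variable linear computation that also exposes exactly where the non-collinearity hypothesis is used. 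What each version buys: the paper's is slightly leaner (no cone machinery); yours isolates the combinatorial heart of the lemma as the transparent statement that a simplicial cone on a proper subset of simple roots meets the $W_{\tilde{\alpha}}$-invariants only at the origin, and makes the sharpness of the hypothesis visible. One small wording slip: where you justify $w^{-1}\eta\in\mathcal{C}_I$ you cite that ``$\tilde{\alpha}-\alpha$ is a non-negative integer combination of simple roots''; the relevant vector is $w^{-1}\eta=\tilde{\alpha}-w^{-1}\alpha$, which has the same property because $w^{-1}\alpha$ is again a root — the conclusion is unaffected.
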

\begin{proof}
Letting $\mathcal{E}_X'=\tilde{\alpha}-\mathcal{E}_X$, we want to check that
$$\bigcap_{w\in W_{\tilde{\alpha}}} w\cdot \mathcal{E}_X' = \{0\}.$$
For sake of clarity, we deal first with the case when $\tilde{\alpha}$ is proportional to some fundamental weight $\omega=\omega_{i_0}$. Any element $u$ in $\mathcal{E}_X'$ can be written $u=\tilde{\alpha}-\alpha$, so that 
$$\langle u,\tilde{\alpha}\rangle = \|\tilde{\alpha}\|^2-\langle\alpha,\tilde{\alpha}\rangle,$$
and, as $\tilde{\alpha}$ has maximal norm among the roots, this shows,
\begin{equation}\label{positive}
\forall u\in \mathcal{E}_X'\backslash\{0\},\ \langle u,\tilde{\alpha}\rangle > 0.
\end{equation}
On the other hand, since the largest root $\tilde{\alpha}$ is proportional to a fundamental weight, the elements of $E$ invariant under $W_{\tilde{\alpha}}$ are proportional to $\tilde{\alpha}$. This implies that the element $\frac{1}{|W_{\tilde{\alpha}}|}\sum_{w\in W_{\tilde{\alpha}}}w \in \End E$ is just the orthogonal projection to $\mathbb{R}\tilde{\alpha}$, so that
$$\frac{1}{|W_{\tilde{\alpha}}|}\sum_{w\in W_{\tilde{\alpha}}}w\cdot X = \langle X,\frac{\tilde{\alpha}}{\|\tilde{\alpha}\|^2}\rangle\tilde{\alpha},$$
is a nonzero multiple of $\tilde{\alpha}$. This implies in particular that
$$\bigcap_{w\in W_{\tilde{\alpha}}} w\cdot X^\perp \subset \tilde{\alpha}^\perp.$$
Recalling (\ref{positive}), we indeed find
$$\bigcap_{w\in W_{\tilde{\alpha}}} w\cdot \mathcal{E}_X' \subset \mathcal{E}_X'\cap \bigcap_{w\in W_{\tilde{\alpha}}} w\cdot X^\perp \subset \mathcal{E}_X'\cap \tilde{\alpha}^\perp = \{0\}.$$
We deal now with the case $\tilde{\alpha}=\omega+\omega^*$, with $\omega\neq\omega^*$. This means that the group $G$ is of type $A_\ell$, i.e. locally isomorphic to $SU(\ell+1)$. Note that this is exactly the case studied by Bourgain and Gamburd in \cite{bourgaingamburdsud}. We may modify the above argument in the following way. The element  $\frac{1}{|W_{\tilde{\alpha}}|}\sum_{w\in W_{\tilde{\alpha}}}w$ is the orthogonal projection on the subspace $\mathbb{R}\omega\oplus\mathbb{R}\omega^*$. As $X$ is not collinear to $\omega$ nor to $\omega^*$, we have 
$$\frac{1}{|W_{\tilde{\alpha}}|}\sum_{w\in W_{\tilde{\alpha}}}w\cdot X = a\omega+b\omega^*, \quad\mbox{for some}\ a,b>0$$
so that
$$\bigcap_{w\in W_{\tilde{\alpha}}} w\cdot X^\perp \subset (a\omega+b\omega^*)^\perp.$$
Then we observe that any element $u$ in $\mathcal{E}_X'$ is a sum of simple roots:
$$u=\sum_{\alpha\in\Pi} n_\alpha\alpha$$
and as $\tilde{\alpha}=\omega+\omega^*$ has maximal norm among the roots, we must have $n_\alpha\geq 1$ for $\alpha$ the simple root corresponding to $\omega$ or $\omega^*$. This implies in particular
$$\forall u\in \mathcal{E}_X'\backslash\{0\},\ \langle u,a\omega+b\omega^*\rangle > 0.$$
As before, this yields
$$\bigcap_{w\in W_{\tilde{\alpha}}} w\cdot \mathcal{E}_X' \subset \mathcal{E}_X'\cap \bigcap_{w\in W_{\tilde{\alpha}}} w\cdot X^\perp \subset \mathcal{E}_X'\cap (a\omega+b\omega^*)^\perp = \{0\}.$$
\end{proof}

This property of root systems implies the following result about non-irreducibility of the representations $\mathcal{S}_X$ under proper subgroups of $G$.

\begin{lemma}\label{nonirreducible}
Let $G$ be a connected compact simple Lie group with root system $\Delta$, let $X$ be a nonzero vector in the Weyl chamber $C$. In the case $\tilde{\alpha}=\omega+\omega^*$ and $\omega\neq\omega^*$, assume $X$ is not collinear to $\omega$ nor to $\omega^*$. If $H$ is a proper closed positive dimensional subgroup of $G$ such that for some $\gamma$ in $H$, the vector $\xi_X$ above is an eigenvector of $\gamma$ whose associated eigenvalue has multiplicity one. Then, the representation $\mathcal{S}_X$ is not irreducible under the action of $H$.
\end{lemma}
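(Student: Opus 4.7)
The plan is to argue by contradiction: assume $\mathcal{S}_X$ is irreducible as an $H$-module and derive a contradiction from Lemma~\ref{faces} combined with the simple-eigenvalue hypothesis on $\gamma$. First I would reduce to the identity component: replacing $\gamma$ by a sufficiently divisible positive power, we may assume $\gamma\in H^{0}$, and this power still has $\xi_X$ as a simple-eigenvalue eigenvector. Since $H^{0}$ is connected and compact, $\gamma$ lies in some maximal torus $T_G$ of $G$, and $\xi_X$ is a weight vector of weight $\omega_X$ for $T_G$. The simple-eigenvalue condition then says that no other weight of $\mathcal{S}_X$ takes the value $\omega_X(\gamma)$ on $\gamma$; in particular the extremal Weyl translates $w\xi_X$ of $\xi_X$ occupy distinct $\gamma$-eigenspaces and are therefore linearly independent.

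Next I would apply Lemma~\ref{faces} to pinpoint the highest root vector $E_{\tilde\alpha}$. Each Weyl translate $w\xi_X$ with $w\in W_{\tilde\alpha}$ equals, up to sign, $\bigwedge_{\alpha\in w\mathcal{E}_X}E_\alpha$, and the identity $\bigcap_{w\in W_{\tilde\alpha}}w\mathcal{E}_X=\{\tilde\alpha\}$ forces $E_{\tilde\alpha}$ to be one of the wedge factors in every such $w\xi_X$. Consequently each $w\xi_X$ lies inside $E_{\tilde\alpha}\wedge\bigwedge^{m_X-1}\mathfrak{g}_{\mathbb{C}}$, which is a strong constraint on the location of extremal weight vectors of $\mathcal{S}_X$ inside $\bigwedge^{m_X}\mathfrak{g}_{\mathbb{C}}$.

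Now to produce a proper nonzero $H$-invariant subspace I would consider the $H$-equivariant composition $\varphi:\mathcal{S}_X\hookrightarrow\bigwedge^{m_X}\mathfrak{g}_{\mathbb{C}}\twoheadrightarrow\bigwedge^{m_X}(\mathfrak{g}_{\mathbb{C}}/\mathfrak{h}_{\mathbb{C}})$, induced by the $H$-invariant subspace $\mathfrak{h}_{\mathbb{C}}\subset\mathfrak{g}_{\mathbb{C}}$. Both $\ker\varphi$ and $\mathrm{im}\,\varphi$ are $H$-submodules of their respective ambient spaces, so irreducibility of $\mathcal{S}_X$ forces $\varphi$ to be zero or injective. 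If $\varphi$ is injective and nonzero, comparing $\dim\mathcal{S}_X$ (via the Weyl dimension formula) with $\binom{\dim\mathfrak{g}-\dim\mathfrak{h}}{m_X}$ should yield a contradiction once $\dim\mathfrak{h}<\dim\mathfrak{g}$. If $\varphi=0$, then the image of $\xi_X$ in the exterior quotient vanishes, meaning that the classes $\overline{E_\alpha}\in\mathfrak{g}_{\mathbb{C}}/\mathfrak{h}_{\mathbb{C}}$ for $\alpha\in\mathcal{E}_X$ are linearly dependent; applying the same argument to every Weyl translate of $\xi_X$ produces such dependencies on every face $w\mathcal{E}_X$, and Lemma~\ref{faces} should piece these local facts into the conclusion that $\mathfrak{h}_{\mathbb{C}}$ contains enough root vectors to span all of $\mathfrak{g}_{\mathbb{C}}$, contradicting the properness of $H$.

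The main obstacle is closing out the case $\varphi=0$: translating the vanishing of the Weyl translates of $\xi_X$ in the exterior quotient into the concrete conclusion that $\mathfrak{h}_{\mathbb{C}}=\mathfrak{g}_{\mathbb{C}}$. The simple-eigenvalue assumption on $\gamma$ is precisely what keeps the Weyl translates $w\xi_X$ linearly independent, preventing trivial cancellations, while Lemma~\ref{faces} is the combinatorial engine that turns face-by-face linear dependencies into a global statement on the full root system.
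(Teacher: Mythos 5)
Your strategy correctly identifies the two ingredients (an $H$-stable decomposition of $\bigwedge^{m_X}\mathfrak{g}_{\mathbb{C}}$ induced by $\mathfrak{h}_{\mathbb{C}}$, plus Lemma~\ref{faces} applied to Weyl translates of $\xi_X$), but the specific map $\varphi:\mathcal{S}_X\to\bigwedge^{m_X}(\mathfrak{g}_{\mathbb{C}}/\mathfrak{h}_{\mathbb{C}})$ discards too much information, and neither of your two cases closes. In the injective case there is no contradiction: nothing forces $\dim\mathcal{S}_X>\binom{\dim\mathfrak{g}-\dim\mathfrak{h}}{m_X}$ (take $H$ of small dimension), so an injective $H$-equivariant $\varphi$ is perfectly possible. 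In the case $\varphi=0$, the vanishing of $g\cdot\xi_X$ in the quotient says only that $g\cdot F$ meets $\mathfrak{h}_{\mathbb{C}}$ nontrivially for every $g$, where $F=\Span(E_\alpha,\ \alpha\in\mathcal{E}_X)$; this holds automatically whenever $m_X+\dim\mathfrak{h}>\dim\mathfrak{g}$ and in no way forces $\mathfrak{h}_{\mathbb{C}}=\mathfrak{g}_{\mathbb{C}}$, and Lemma~\ref{faces} cannot upgrade it because the lemma controls $\bigcap_w w\cdot\mathcal{E}_X$, not intersections of $\mathfrak{h}_{\mathbb{C}}$ with the translates $w\cdot F$. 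Symptomatically, your argument never genuinely uses the multiplicity-one hypothesis: linear independence of the extremal weight vectors is automatic (they sit in distinct $T$-weight spaces), and your preliminary reduction (replacing $\gamma$ by a power to land in $H^0$) can in fact destroy the multiplicity-one property, since distinct unit eigenvalues may collide after taking powers.

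The paper instead sets $L=\mathfrak{h}_{\mathbb{C}}$, $L^\perp$ its Killing-orthogonal complement, and uses the full $H$-stable decomposition $\bigwedge^{m_X}\mathfrak{g}_{\mathbb{C}}=\bigoplus_{j}\bigwedge^jL\wedge\bigwedge^{m_X-j}L^\perp$. The multiplicity-one hypothesis is exactly what forces the eigenvector $\xi_X$ to lie in a \emph{single} summand, so that $\mathcal{S}_X\cap\bigl(\bigwedge^jL\wedge\bigwedge^{m_X-j}L^\perp\bigr)$ is a nonzero $H$-invariant subspace; one then shows it is proper. If it were all of $\mathcal{S}_X$, then every translate $g\cdot\xi_X$ would lie in that same summand; membership of a pure tensor in $\bigwedge^jL\wedge\bigwedge^{m_X-j}L^\perp$ splits its span as $F=(F\cap L)\oplus(F\cap L^\perp)$, whence $p_L(E_{\tilde{\alpha}})\in w\cdot F$ for every $w\in W_{\tilde{\alpha}}$, and Lemma~\ref{faces} then pins $p_L(E_{\tilde{\alpha}})$ to $\mathbb{C}E_{\tilde{\alpha}}$, giving $E_{\tilde{\alpha}}\in L$ or $E_{\tilde{\alpha}}\in L^\perp$; running this over all $g\in G$ and a positive-measure set of $g$ yields $L=\mathfrak{g}_{\mathbb{C}}$, contradicting properness of $H$. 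Your $\varphi$ is essentially the projection onto the single summand $j=0$, which is why it cannot see this structure; to repair the proof you should work with the whole graded decomposition and use the simple eigenvalue to locate $\xi_X$ within it.
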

\begin{proof}
Denote by $L$ the complexification of the Lie algebra of $H$, by $L^\perp$ its orthogonal for the Killing form, and write
$$\textstyle{\bigwedge}^{m_X}\mathfrak{g}_{\mathbb{C}} = \bigoplus_{j=0}^{m_X} \textstyle{\bigwedge^jL\wedge\bigwedge^{m_X-j}L^\perp}.$$
All the subspaces on the right-hand side of the formula are stable under the action of $\gamma$ (in fact, of $H$), so that the eigenvector $\xi_X$, whose associated eigenvalue has multiplicity one, must belong to one of them, say
\begin{equation}\label{xiin}
\textstyle\xi_X \in \bigwedge^jL\wedge\bigwedge^{m_X-j}L^\perp.
\end{equation}
The subspace $\mathcal{S}_X\cap \bigwedge^jL\wedge\bigwedge^{m_X-j}L^\perp$ is a nonzero subspace of $\mathcal{S}_X$ that is invariant under $H$. Suppose for a contradiction that it is equal to the whole of $\mathcal{S}_X$, i.e. that
\begin{equation}\label{included}
\textstyle\mathcal{S}_X\subset \bigwedge^jL\wedge\bigwedge^{m_X-j}L^\perp.
\end{equation}
Let $F$ be the subspace of $\mathfrak{g}_{\mathbb{C}}$ generated by the $E_\alpha$, for $\alpha$ in $\mathcal{E}_X$. By (\ref{xiin}), we have
$$F = F\cap L \oplus F\cap L^\perp.$$
As the largest root $\tilde{\alpha}$ is always in $\mathcal{E}_X$, the vector $E_{\tilde{\alpha}}$ is in $F$, and therefore,
$$p_L(E_{\tilde{\alpha}})\in F,$$
where $p_L$ denotes the orthogonal projections from $\mathfrak{g}_{\mathbb{C}}$ to $L$.
Now, let $w$ be an element of the Weyl group of $\Delta$ fixing $\tilde{\alpha}$. By (\ref{included}) and the fact that $\mathcal{S}_X$ is stable under $G$, we have
$$\textstyle w\cdot\xi_X \in \bigwedge^jL\wedge\bigwedge^{m_X-j}L^\perp.$$
Reasoning as before, this yields, since $\tilde{\alpha}$ is invariant under $w$,
$$p_L(E_{\tilde{\alpha}})\in w\cdot F.$$
Therefore, letting $w$ describe the stabilizer $W_{\tilde{\alpha}}$ of the largest root, we obtain
$$p_L(E_{\tilde{\alpha}})\in \bigcap_{w\in W_{\tilde{\alpha}}}w\cdot F.$$
However, by Lemma~\ref{faces}, the intersection on the right reduces to $\mathbb{C}E_{\tilde{\alpha}}$. If $p_L(E_{\tilde{\alpha}})\neq 0$, we find $E_{\tilde{\alpha}} \in L$.
Otherwise, $E_{\tilde{\alpha}} \in L^\perp$.
To conclude, we observe that by (\ref{included}) and the fact that $\mathcal{S}_X$ is stable under $G$, we have, for any $g$ in $G$,
$$g\cdot\xi_X \in \bigwedge^jL\wedge\bigwedge^{m_X-j}L^\perp,$$
so that we can reason exactly as before, just conjugating the maximal torus $T$, the root-spaces and the space $F$ by the element $g$. This yields
$$g\cdot E_{\tilde{\alpha}}\in L \quad\mbox{or}\quad g\cdot E_{\tilde{\alpha}}\in L^\perp.$$
Exchanging if necessary $L$ and $L^\perp$, we may assume without loss of generality that for a set $A\subset G$ of positive Haar measure in $G$, we have
$$\forall g\in A,\ g\cdot E_{\tilde{\alpha}} \in L,$$
which is easily seen to imply $L=\mathfrak{g}_{\mathbb{C}}$ contradicting the assumption that $H$ is a proper closed connected subgroup of $G$.\\
Thus, we have shown that $\mathcal{S}_X\cap \bigwedge^jL\wedge\bigwedge^{m_X-j}L^\perp$ is a proper subspace of $\mathcal{S}_X$ that is invariant under $H$. In particular, $\mathcal{S}_X$ is not irreducible under $H$.
\end{proof}

\begin{remark}
Note that the fact that $\mathcal{S}_X$ is not irreducible under $H$ also implies that it is not irreducible under any conjugate $aHa^{-1}$ of $H$.
\end{remark}

We are now ready to conclude the proof of Proposition~\ref{subgroups} by deriving Lemma~\ref{goodrepresentation}.

\begin{proof}[Proof of Lemma~\ref{goodrepresentation}]
Clearly, it suffices to deal with maximal proper closed subgroups $H$. There are only finitely many such maximal subgroups, up to conjugation by elements of $G$. Denote by $\T$ a finite set of representatives modulo conjugation of all maximal closed subgroups $H$ that admit a conjugate $H_0$ such that $H_0\cap\Gamma$ is infinite. We may require that for each $H_0$ in $\T$, the intersection $\Gamma\cap H_0$ is infinite. For each such $H_0$, we will construct a vector space $\mathcal{S}$ over a local field $K$ and a representation of $\Gamma$ in $\mathcal{S}$ such that:
\begin{itemize}
\item the group $\Gamma$ acts proximally and strongly irreducibly on $\mathcal{S}$,
\item if $H$ is any conjugate of $H_0$, then $H\cap\Gamma$ stabilizes a proper subspace of $\mathcal{S}$.
\end{itemize}
As $\Gamma\cap H_0$ is infinite, it contains a non-torsion element $\gamma$. Then, $\Ad\gamma$ has an eigenvalue $\lambda$ that is not a root of unity.
If $\sk$ is the field generated by the coefficients of all $\Ad g$, $g\in\Gamma$, by \cite[Lemma~4.1]{titsalternative}, we may choose an embedding of $\sk(\lambda)$ into a local field $K_v$ such that $|\lambda|_v>1$.\\
Denote by $\Delta$ the root system of $G$ and by $E$ the Euclidean space containing it. For some $X_0\in E$, the eigenvalues of $\Ad\gamma$ are: $1$ (with multiplicity $\rk G$) and the $e^{i\langle\alpha,X_0\rangle}$, $\alpha\in\Delta$.\\
As $|\cdot|_v$ is multiplicative, there exists a unique $X\in E$ such that
$$\forall \alpha\in\Delta,\quad \log|e^{i\langle\alpha,X_0\rangle}|_v = \langle\alpha,X\rangle.$$
We choose a basis for $\Delta$ such that $X$ lies in the Weyl chamber $C$ and consider the associated complex irreducible representation of $G$ introduced earlier as $\mathcal{S}_X$.
We choose a finite extension $K$ of $K_v$ containing all extensions of $\sk$ of degree at most $\dim\mathcal{S}_X$ and such that $\bG$ is split over $K$. The representation $\mathcal{S}_X$ is then defined over $K$, and we set $\mathcal{S}=\mathcal{S}_X(K)$. As $\Gamma$ is a Zariski dense subgroup of $\bG(K)$, $\mathcal{S}$ is a strongly irreducible and proximal representation of $\Gamma$.\\
On the other hand, writing the largest root $\tilde{\alpha}=\omega$ or $\tilde{\alpha}=\omega+\omega^*$, Assumption~(\ref{notomega}) implies that the element $X$ is not collinear to $\omega$ nor to $\omega^*$. Moreover, the vector $\xi_X$ is the eigenvector of $\gamma$ associated to the unique eigenvalue of maximal modulus in $K_v$, so that Lemma~\ref{nonirreducible} shows that $\mathcal{S}_X$ is not irreducible under $H_0$.
As we already observed, this implies that whenever $H$ is conjugate to $H_0$, $\mathcal{S}_X$ is not irreducible under $H$.\\
Thus, if $H$ is any conjugate of $H_0$, applying Lemma~\ref{boundedextension} below to the set of $\Ad g$, for $g\in\Gamma\cap H$, we obtain an extension $K'>K$ of degree at most $\dim\mathcal{S}_X$ and a proper subspace of $\mathcal{S}_X$ defined over $K'$ that is stable under $\Gamma\cap H$. This yields a proper subspace of $\mathcal{S}$ stable under $\Gamma\cap H$ and finishes the proof.
\end{proof}

For convenience of the reader, we recall the following easy linear algebra lemma, which we just used in the above proof.

\begin{lemma}\label{boundedextension}
Let $A$ be a subset of $SU(d)$ whose elements have coefficients in a field $\sk<\mathbb{C}$, and suppose $A$ stabilizes a proper subspace $V$ of $\mathbb{C}^d$. Then there exists an extension $\sk'>\sk$ of degree at most $d$ and a proper subspace $V'$ defined over $\sk'$ and stable under $A$.
\end{lemma}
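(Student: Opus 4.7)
The plan is to analyze $V$ through the $\sk$-subalgebra $R=\sk[A]\subset M_d(\sk)$; since $A$ stabilizes $V$, so does $R$, and I want to produce a proper $R$-invariant subspace of $\mathbb{C}^d$ defined over some extension $\sk'$ of $\sk$ of degree at most $d$. I split into two cases according to whether $\sk^d$ is itself simple as an $R$-module.

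If $\sk^d$ is not $R$-simple, a proper $R$-submodule $W\subset\sk^d$ gives the required $V':=W\otimes_\sk\mathbb{C}$, already defined over $\sk':=\sk$.

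Suppose instead $\sk^d$ is $R$-simple. By Schur's lemma, $D:=\End_R(\sk^d)$ is a division algebra over $\sk$, and since $\sk^d$ is a free $D$-module, $\dim_\sk D$ divides $d$. The crucial step, and the one I expect to be the main obstacle, is to rule out the possibility $D=\sk$: indeed, if $D=\sk$ then Jacobson density would force the inclusion $R\hookrightarrow M_d(\sk)=\End_\sk(\sk^d)$ to be an equality, so $R\otimes_\sk\mathbb{C}=M_d(\mathbb{C})$ would act irreducibly on $\mathbb{C}^d$, contradicting the existence of the proper invariant subspace $V$.

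Hence $D\neq\sk$, and I pick some $\alpha\in D\setminus\sk$. Since $\sk[\alpha]\subset D$ is a subfield of the division algebra $D$, the minimal polynomial $p\in\sk[x]$ of $\alpha$ satisfies $2\leq\deg p\leq\dim_\sk D\leq d$; because $D$ acts faithfully on $\sk^d$, this $p$ is also the minimal polynomial of $\alpha$ viewed as a $\mathbb{C}$-linear operator on $\mathbb{C}^d$. For any root $\lambda\in\mathbb{C}$ of $p$, I then take $V':=\ker(\alpha-\lambda)\subset\mathbb{C}^d$: this is defined over $\sk':=\sk(\lambda)$ (of degree $\deg p\leq d$), stable under $R$ because $\alpha$ commutes with $R$ (and hence stable under $A$), and proper because in characteristic zero $p$ admits at least one other (distinct) root in $\mathbb{C}$, whose eigenvector falls outside $V'$. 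This $V'$ fulfils the lemma; everything apart from the Jacobson-density step is routine linear algebra.
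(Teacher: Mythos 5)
Your argument is correct, and it reaches the same endgame as the paper's (produce a non-scalar matrix $x$ with $\sk$-coefficients commuting with every element of $A$, then take an eigenvalue $\lambda$, set $\sk'=\sk(\lambda)$ and $V'=\ker(x-\lambda)$), but it gets there by a genuinely different route. The paper exploits the hypothesis $A\subset SU(d)$ directly: since $A$ preserves both $V$ and $V^\perp$, the orthogonal projection onto $V$ lies in the commutant, so the commutant — a linear space cut out over $\sk$ — has $\mathbb{C}$-dimension at least $2$ and hence contains a non-scalar $\sk$-point. You instead work with the $\sk$-algebra $R=\sk[A]$, split on whether $\sk^d$ is $R$-simple, and in the simple case invoke Schur's lemma and Jacobson density to show $D=\End_R(\sk^d)$ is a division ring strictly larger than $\sk$ (using the hypothesis that $\mathbb{C}[A]$ is not all of $M_d(\mathbb{C})$ to forbid $D=\sk$). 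This buys generality: your argument never uses the unitarity of $A$, only that the elements have coefficients in $\sk$ and stabilize a proper subspace over $\mathbb{C}$, so it proves the statement for an arbitrary subset of $GL_d(\mathbb{C})$. The paper's proof is shorter and entirely elementary but is genuinely tied to the unitary setting (which is all that is needed for the application). One cosmetic simplification to your write-up: $\deg p\leq d$ is automatic since $\alpha$ is a $d\times d$ matrix, so the freeness of $\sk^d$ over $D$ need not be invoked.
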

\begin{proof}
The set of solutions $x\in\End(\mathbb{C}^d)$ to
\begin{equation}\label{linear}
\forall a\in A,\ ax = xa,
\end{equation}
is a vector space defined over $\sk$, it contains both the identity and the orthogonal projection on the proper stable subspace, so it has dimension at least two. Therefore, we may find a solution $x$ that has coefficients in $\sk$ and is not a homethety. Then, pick an eigenvalue $\lambda$ of $x$, let $\sk'=\sk(\lambda)$ and $V'=\ker(x-\lambda I)$; this solves the problem.
\end{proof}

\subsection{From a closed subgroup to a small neighborhood}

Let $S$ be a finite set of \emph{algebraic} elements in $G$, and let $\Gamma=\langle S\rangle$ be the subgroup generated by $S$.
We endow $\Gamma$ with the word metric associated to the generating system $S$, and denote by $B_\Gamma(n)$ the ball of radius $n$ centered at the identity, for that metric.
If $L$ is a proper subspace of the Lie algebra $\mathfrak{g}$ of $G$, we let
$$H_L = \{g\in G \,|\, (\Ad g)L=L \}.$$
The key proposition is the following.

\begin{proposition}\label{neighborhood}
Let $G$ be a connected compact simple group and $\Gamma$ a dense subgroup generated by a finite set $S$ of algebraic elements of $G$.
There exist a constant $C_1=C_1(S)$ and an integer $n_0$ such that for any integer $n\geq n_0$, for any proper subspace $L_0<\mathfrak{g}$, there exists a proper closed subgroup $H_1<G$ such that
$$B_\Gamma(n)\cap H_{L_0}^{(e^{-C_1n})} \subset B_\Gamma(n)\cap H_1.$$
\end{proposition}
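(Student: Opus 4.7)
The strategy combines a height bound for elements of $B_\Gamma(n)$ with an effective algebraic rigidity argument. Let $\sk$ be the number field generated by the matrix entries of $\Ad g$ for $g\in S$; for any $\gamma\in B_\Gamma(n)$ the matrix $\Ad\gamma$ lies in $\Mat_d(\sk)$ and has absolute multiplicative height at most $e^{C_S n}$, where $C_S$ depends only on $S$. The hypothesis $d(\gamma,H_{L_0})\leq e^{-C_1 n}$ translates, up to an implicit multiplicative constant, to $(\Ad\gamma)L_0$ being within $e^{-C_1 n}$ of $L_0$ in the Grassmannian $\mathrm{Gr}(k,\g)$, where $k=\dim L_0$.

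The idea is to attach to each such $\gamma$ an exact algebraic $\Ad\gamma$-invariant subspace $L_\gamma\subset\g$ very close to $L_0$, and then show that $L_\gamma$ does not depend on $\gamma$. The variety
$$V_\gamma := \{L\in\mathrm{Gr}(k,\g) \mid (\Ad\gamma)L = L\}$$
is a $\sk$-algebraic subvariety of the Grassmannian cut out by polynomial equations whose coefficients are polynomials of bounded degree in the entries of $\Ad\gamma$, so its defining ideal has degree $O(1)$ and height at most $e^{O_S(n)}$. Since $L_0$ lies within $e^{-C_1 n}$ of $V_\gamma$, an effective Łojasiewicz inequality---a consequence of the effective arithmetic Nullstellensatz---produces a point $L_\gamma\in V_\gamma$, defined over a bounded-degree extension of $\sk$, with $d(L_\gamma,L_0)\leq e^{-c_1 C_1 n}$ and with Plücker coordinates of height at most $e^{O_S(n)}$. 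When $V_\gamma$ has several components near $L_0$, one fixes a canonical choice, for instance the nearest point of $V_\gamma$ to $L_0$ on a distinguished irreducible component.

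The rigidity step is a Liouville-type lower bound: two distinct algebraic points in the Grassmannian whose Plücker coordinates lie in a bounded-degree extension of $\sk$ and have height at most $e^{O_S(n)}$ are at distance at least $e^{-c_2 n}$ for some $c_2=c_2(S)>0$. For any pair $\gamma,\gamma'$ in $B_\Gamma(n)\cap H_{L_0}^{(e^{-C_1 n})}$, both $L_\gamma$ and $L_{\gamma'}$ lie within $e^{-c_1 C_1 n}$ of $L_0$, hence within $2e^{-c_1 C_1 n}$ of each other. Choosing $C_1$ large enough that $c_1 C_1>c_2$, and $n_0$ large enough that the Liouville bound is effective, forces $L_\gamma=L_{\gamma'}$; denoting the common value $L_*$, every $\gamma\in B_\Gamma(n)\cap H_{L_0}^{(e^{-C_1 n})}$ stabilizes $L_*$, so $H_1:=\{g\in G : (\Ad g)L_*=L_*\}$ is a proper closed subgroup of $G$ containing the whole set.

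The main obstacle is the construction of $L_\gamma$ with controlled complexity, which must be carried out robustly even when $V_\gamma$ is positive-dimensional or singular near $L_0$---for instance, when $\Ad\gamma$ has repeated eigenvalues and a whole algebraic family of $k$-dimensional invariant subspaces passes through the region near $L_0$. This is the step that genuinely invokes the effective arithmetic Nullstellensatz on the variety $V_\gamma$, since one needs the existence of a nearby algebraic point with explicit height and degree bounds from the mere metric proximity of $L_0$ to $V_\gamma$. Once $L_\gamma$ has been produced with a clean complexity bound, the rest of the argument reduces to a standard Liouville-type estimate.
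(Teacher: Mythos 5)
Your approach is genuinely different from the paper's, and the difference matters. The paper applies the effective arithmetic Nullstellensatz \emph{once}, to the entire finite family of equations consisting of the Pl\"ucker relations $R_i$ together with the invariance equations $P_{I_0,g}(v) = g\cdot v - v$ for \emph{all} $g \in B_\Gamma(n)\cap H_{L_0}^{(e^{-C_1 n})}$ simultaneously. The Pl\"ucker vector $u'$ of $L_0$ is an approximate common zero of this system, with error $\ll e^{-C_1 n}$, and the coefficients have size $q^{O(n)}$; so if there were \emph{no} exact common zero, the Masser--W\"ustholz Bezout identity $a = \sum Q_j P_j$ with $a\in\mathcal{O}_{\sk}\setminus\{0\}$ of size $q^{O(n)}$, evaluated at $u'$, would violate the Liouville bound $|a|\geq q^{-Mn}$. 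The exact common zero is then a single pure tensor $v$ fixed by every $g$ in the set, giving $H_1 = H_{\Span v_i}$ directly. No per-$\gamma$ object, no height bound on the zero, and no comparison argument is needed.

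Your proposal instead tries to produce, for each individual $\gamma$, an exact $\Ad\gamma$-invariant subspace $L_\gamma$ near $L_0$ of controlled degree and height, and then identify the $L_\gamma$'s pairwise by a Liouville estimate. The second step is fine, but the first step is the genuine gap, and you have flagged it yourself without filling it. The effective Nullstellensatz (in the Masser--W\"ustholz form the paper cites) is a statement about the Bezout identity when a system has \emph{no} common zero; it does not, by itself, produce an algebraic point of bounded height on a variety $V_\gamma$ from the mere metric proximity of $L_0$ to $V_\gamma$. Concretely, your ``canonical choice'' of $L_\gamma$ as the nearest point of a distinguished component of $V_\gamma$ to $L_0$ is typically transcendental, or at best has no a priori height bound: on a positive-dimensional component defined over $\sk$, the nearest point of bounded height to a given reference point can lie far away (think of a line of irrational slope). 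Getting a nearby algebraic point of controlled height out of metric proximity is a statement in the spirit of quantitative arithmetic geometry (point-counting, arithmetic B\'ezout) and is much more delicate than a L{}ojasiewicz inequality; calling it a consequence of the effective Nullstellensatz is not justified. The paper's formulation sidesteps this entirely, which is precisely why the Nullstellensatz is applied to the whole family at once rather than $\gamma$-by-$\gamma$.
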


With this proposition, let us prove Theorem~\ref{algebraicspectralgap}.

\begin{proof}[Proof of Theorem~\ref{algebraicspectralgap}]
By Theorem~\ref{adsg}, it suffices to check that $\mu$ is almost Diophantine. Let $C_1$ be the constant given by Proposition~\ref{neighborhood}.
For $H$ a proper closed subgroup of $G$ we want to bound $\mu^{*n}(H^{(e^{-C_1n})})$. If $H$ is finite we conclude as in the proof of Lemma~\ref{goodrepresentation} using Selberg's Lemma and Kesten's Theorem, so we may as well assume that $H$ is positive dimensional. Denote by $L_0$ its Lie algebra. By Proposition~\ref{neighborhood},
$$B_\Gamma(n)\cap H_{L_0}^{(e^{-C_1n})} \subset B_\Gamma(n)\cap H_1,$$
and therefore, by Proposition~\ref{subgroups} (taking $c_2=\kappa>0$),
$$\mu^{*n}(H^{(e^{-C_1 n})}) \leq \mu^{*n}(H_1) \leq e^{-c_2 n},$$
and $\mu$ is almost Diophantine.
\end{proof}

To prove Proposition~\ref{neighborhood} we want to use an effective version of Hilbert's Nullstellensatz. For that, we need to set up some notation.

\bigskip

Let $e_i$, $1\leq i\leq d$, be a basis for $\mathfrak{g}_{\mathbb{C}}$, and define, for $I\subset\{1,\dots,d\}$,
$$e_I=\bigwedge_{i\in I}e_i.$$
The family $(e_I)_{|I|=l}$ is a basis for $\bigwedge^\ell\mathfrak{g}_{\mathbb{C}}$.
Denote $\mathcal{W}_\ell\subset\bigwedge^\ell\mathfrak{g}_{\mathbb{C}}$ the set of pure tensors, i.e. the set of elements in $\bigwedge^\ell\mathfrak{g}_{\mathbb{C}}$ that can be written $v_1\wedge v_2\wedge\dots\wedge v_\ell$ for some $v_i$'s in $\mathfrak{g}_{\mathbb{C}}$. It is easy to check that $\mathcal{W}_\ell$ is an algebraic subvariety of $\bigwedge^\ell\mathfrak{g}_{\mathbb{C}}$ defined over the rationals and therefore, we may choose a finite collection of polynomials $(R_j)_{1\leq j\leq C}$ with integer coefficients in ${d\choose \ell}$ variables such that for any $v=\sum v_I e_I$ in $\bigwedge^\ell \mathfrak{g}_{\mathbb{C}}$,
$$v\in \mathcal{W}_\ell \Longleftrightarrow \forall j,\ R_j((v_I)_{|I|=\ell})=0.$$
We also define a family of polynomial maps $P_{I_0,g}:\mathbb{C}^{{d\choose \ell}-1}\rightarrow \bigwedge^\ell\mathfrak{g}_{\mathbb{C}}$ for $I_0\subset\{1,\dots,d\}$ with $|I_0|=\ell$ and $g\in G$, in the following way. The polynomial $P_{I_0,g}$ has ${d\choose \ell}-1$ variables $v_I$, indexed by all subsets $I$ of $\{1,\dots,d\}$ of cardinality $\ell$ except $I_0$, and is defined by
$$P_{I_0,g}((v_I)) =  g\cdot v -v,$$
where $v=e_{I_0}+\sum_{I\neq I_0}v_Ie_I$.

\begin{definition}
If $P$ is a polynomial map $\mathbb{C}^a\rightarrow\mathbb{C}^b$ with coefficients in a number field $\sk$ (in the canonical bases), we define the \emph{size} of $P$ by
$$\|P\| = \max\{ |\sigma(c)| \,;\, c\ \mbox{coefficient of}\ P,\ \sigma\in\Hom_{\mathbb{Q}}(\sk,\mathbb{C})\}.$$
\end{definition}

Let $\sk$ be the number field generated by the coefficients of all $\Ad g$, for $g\in\Gamma$, and denote by $\mathcal{O}_{\sk}$ its ring of integers.
We have the following obvious lemma.

\begin{lemma}
There exists a positive integer $q=q(S)$ such that if $g\in B_\Gamma(n)$, then $q^nP_{I_0,g}$ has coefficients in $\mathcal{O}_{\sk}$ and
$$\|q^n P_{I_0,g}\| \leq q^{2n}.$$
\end{lemma}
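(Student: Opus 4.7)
The plan is to unpack $P_{I_0,g}$ into entries of the exterior power representation and then clear denominators uniformly in the length of the word representing $g$.

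Substituting $v = e_{I_0}+\sum_{I\neq I_0}v_Ie_I$ in the definition of $P_{I_0,g}$ yields
$$P_{I_0,g}((v_I)) = \bigl(\bigwedge\nolimits^{\ell}\Ad g-\mathrm{Id}\bigr)e_{I_0} + \sum_{I\neq I_0}v_I\bigl(\bigwedge\nolimits^{\ell}\Ad g-\mathrm{Id}\bigr)e_I,$$
so in the basis $(e_J)_{|J|=\ell}$ every coefficient of $P_{I_0,g}$ is an entry of the matrix $\bigwedge^{\ell}\Ad g-\mathrm{Id}$. This reduces the claim to a statement about these entries.

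Since $S$ is finite and its elements are algebraic, there exists a positive integer $q_0$ such that $q_0(\bigwedge^{\ell}\Ad s)_{IJ}\in\mathcal{O}_{\sk}$ for every $s\in S\cup S^{-1}$ and every pair of multi-indices $I,J\subset\{1,\dots,d\}$ of size $\ell$. For $g=s_1\cdots s_n\in B_\Gamma(n)$, functoriality of the exterior power gives
$$q_0^n\bigwedge\nolimits^{\ell}\Ad g = \bigl(q_0\bigwedge\nolimits^{\ell}\Ad s_1\bigr)\cdots\bigl(q_0\bigwedge\nolimits^{\ell}\Ad s_n\bigr),$$
so the entries of $q_0^n\bigwedge^\ell\Ad g$ lie in $\mathcal{O}_{\sk}$; the identity shift is harmless, hence $q_0^nP_{I_0,g}$ already has $\mathcal{O}_{\sk}$-valued coefficients.

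For the archimedean bound, set
$$N_0=\max\bigl\{|\sigma((\bigwedge\nolimits^\ell\Ad s)_{IJ})|:s\in S\cup S^{-1},\ I,J,\ \sigma\in\Hom_\mathbb{Q}(\sk,\mathbb{C})\bigr\},$$
which is finite because there are only finitely many generators, multi-indices and embeddings. Expanding the product above gives $|\sigma((q_0^n\bigwedge^\ell\Ad g)_{IJ})|\leq\binom{d}{\ell}^{n-1}(q_0N_0)^n$ for every $\sigma$. It now suffices to enlarge $q_0$ to any multiple $q=q_0M$ with $q\geq 2\binom{d}{\ell}N_0$: integrality is preserved, and the archimedean estimate, together with the fact that the $-\mathrm{Id}$ contribution has size at most $1$, immediately yields $\|q^nP_{I_0,g}\|\leq q^{2n}$ for every $n\geq 1$.

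There is no genuine obstacle here: the lemma is a bookkeeping consequence of the multiplicativity of $\bigwedge^\ell$ together with the elementary fact that a finite collection of algebraic numbers has conjugates of bounded size under every embedding of the number field they generate.
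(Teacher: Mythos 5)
Your proof is correct, and it is exactly the bookkeeping argument the paper skips over by declaring the lemma ``obvious'': the coefficients of $P_{I_0,g}$ are the entries of $\bigwedge^\ell\Ad g - \mathrm{Id}$, multiplicativity of $\bigwedge^\ell$ and of the embeddings $\sigma$ lets you control both the denominators and the conjugate absolute values of these entries by products over a word spelling out $g$, and then a single enlargement of $q_0$ absorbs the combinatorial factor $\binom{d}{\ell}$ and the $-\mathrm{Id}$ shift. One small point of hygiene worth adding: an element of $B_\Gamma(n)$ is a word of length $m\leq n$, not exactly $n$, so you should either pad with identities or observe that $q_0^{n-m}$ preserves integrality and that (after replacing $N_0$ by $\max(N_0,1)$) the archimedean estimate for length $m$ is dominated by the one for length $n$. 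With that caveat the argument is complete and is the intended one.
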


We are now ready to derive Proposition~\ref{neighborhood}. The letter $C$ denotes any constant that depends only on $G$; this constant will change along the proof.

\begin{proof}[Proof of Proposition~\ref{neighborhood}]
Let $L_0$ be an $\ell$-dimensional subspace of $\mathfrak{g}$ with orthonormal basis $(u_i)_{1\leq i\leq \ell}$. Write $u=u_1\wedge\dots\wedge u_\ell=\sum_{I} u_Ie_I$. As $L_0$ is defined over the reals, $H_{L_0}\cdot u=\pm u$. We assume for simplicity that $H_{L_0}\cdot u = u$.
\footnote{Otherwise, one should use polynomials $P_{I_0,g}(v)$ defining the subvariety $\{v \,|\, g\cdot v\pm v=0\}$.}
For some $I_0$, we have $|u_{I_0}|\geq \frac{1}{C}$ for some constant $C$ depending only on $\dim G$. We let $u'=\frac{1}{|u_{I_0}|}u$, so that $\|u'\| \leq C$.
We claim that if we choose $C_1$ large enough, then, for $n\geq n_0$ ($C_1,n_0$ independent of $L_0$), the family of polynomials $\mathcal{P}=\{R_i\}\cup\{P_{I_0,g}\}_{g\in H_{L_0}^{(e^{-C_1n})}\cap B_\Gamma(n)}$ must have a common zero in $\mathbb{C}^{{d\choose \ell}-1}$.\\
Suppose for a contradiction that this is not the case.
By the above lemma, there is a positive integer $q$ depending only on $S$ such that for all $P$ in $\mathcal{P}$, $q^nP$ has coefficients in $\mathcal{O}_{\sk}$ and for all $P$ in $\mathcal{P}$,
$$\|q^nP\| \leq q^{2n}.$$
As the $P_{I_0,g}$ have bounded degree (in fact, degree $1$) we may extract from the family $q^n\mathcal{P}$ polynomials $P_j$, $1\leq j\leq C$ generating the same ideal as $\mathcal{P}$.
By the effective Nullstellensatz \cite[Theorem~IV]{masserwustholz}, if the family of polynomials $\mathcal{P}$ has no common zero, then there exist an element $a\in\mathcal{O}_{\sk}$ and polynomials $Q_j$ with coefficients in $\mathcal{O}_{\sk}$, such that
\begin{equation}\label{bezout}
a = \sum Q_jP_j
\end{equation}
and
\begin{equation}\label{effective}
\forall j,\quad \|Q_j\| \leq q^{Cn} \quad \deg Q_j\leq C \quad\mbox{and}\quad \|a\|\leq q^{Cn}.
\end{equation}
Now, we want to evaluate (\ref{bezout}) at $u'$ to get a contradiction.\\
First, we observe that for any $P$ in $q^n\mathcal{P}$ (in particular, for any $P_j$),
$$|P(u')|\leq Cq^n e^{-C_1n}.$$
Indeed, if $P$ is one of the $R_i$'s, we have $P(u')=0$ because $u'$ is a pure tensor; and if $P=P_{I_0,g}$, using that $g\in H_{L_0}^{(e^{-Cn})}$ and that $H_{L_0}$ fixes $u'$,
we also find the desired estimate.\\
Second, by (\ref{effective}) and the fact that $\|u'\|\leq C$, we have, for each $j$,
$$|Q_j(u')|\leq C q^{Cn}.$$
Finally, as $a$ is a nonzero element of $\mathcal{O}_{\sk}$ of size at most $q^{Cn}$, we have a lower bound on its complex absolute value (for a constant $M$ depending only on $\mathcal{O}_{\sk}$):
$$q^{-Mn} \leq |a|.$$
Thus,
$$q^{-Mn} \leq |a| \leq \sum |Q_j(u')||P_j(u')| \leq C q^{Cn} e^{-C_1n},$$
which yields a contradiction provided we have chosen $C_1$ large enough (in terms of $C$, $q$ and $M$).\\
Now let $(v_I)_{I\neq I_0}$ be a common zero for the family $\mathcal{P}$. As, for each $i$, $R_i((v_I))=0$, the vector $v=e_{I_0}+\sum_{I\neq I_0} v_Ie_I$ is a pure tensor: $v=v_1\wedge\dots\wedge v_\ell$. Moreover, for all $g$ in $B_\Gamma(n)\cap H_{L_0}^{(e^{-Cn})}$, $g\cdot v=v$, so that the subspace $L_1=\Span v_i$ is stable under $g$. In other terms, $g\in H_{L_1}$, which is what we wanted to show.
\end{proof}

\bibliographystyle{plain}
\bibliography{bibliography}

\end{document}